\newcommand{\nn}{\mathbb{N}}
\newcommand{\zz}{\mathbb{Z}}
\newcommand {\C}{$C^{*}$-algebra\xspace}
\newcommand {\Cs}{$C^{*}$-algebras\xspace}
\newcommand{\XX}{\mathfrak{X}}
\newcommand{\YY}{\mathcal{Y}}
\newcommand{\torus}{\mathbb{T}}
\newcommand{\Tspace}{\mathcal{T}}
\newcommand{\rg}{\mathcal{R}}
\newcommand{\ruhf}{\rg}
\newcommand{\Truhf}[1]{\Tspace\!\ast\!\ruhf(#1)}
\newcommand{\gcheck}{\check{G}}
\newcommand{\gcheckinf}{\check{G}_\infty}
\newcommand{\gchecking}{\gcheckinf}
\newcommand{\RTg}{\mathcal{G}}
\newcommand{\zcheck}{\check{Z}}
\newcommand{\tg}{Z}
\newcommand{\TG}{Z}
\newcommand{\Ind}{\mathop{\mathrm{Ind}}\nolimits}
\newcommand{\supp}{\mathop{\mathrm{supp}}}
\newcommand{\pipull}[1]{\pi\!^*\!{#1}}
\newcommand{\brauer}{\mathop{\rm Brauer}}
\newcommand{\torussheaf}{\underline{\torus}}
\newcommand {\etale}{\'etale\xspace}
\newtheorem{theorem}{Theorem}[section]
\newtheorem{prop}[theorem]{Proposition}
\newtheorem{lemma}[theorem]{Lemma}
\newtheorem{remark}[theorem]{Remark}
\theoremstyle{definition}
\newtheorem{mydef}[theorem]{Definition}
\newtheorem{example}[theorem]{Example}
\numberwithin{equation}{theorem}
\begin{document}
\title{Limits of Groupoid C*-Algebras Arising from Open Covers}
\author{A. Censor}
\address{Aviv Censor, Department of Mathematics,
University of California at Riverside, Riverside, CA, 92521, U.S.A.}
\email{avivc@math.ucr.edu}
\author{D. Markiewicz}
\address{Daniel Markiewicz, Department of Mathematics,
Ben-Gurion University of the Negev, P.O.B. 653, Beersheva 84105,
Israel.} \email{danielm@math.bgu.ac.il}
\date{\today}
\begin{abstract}
I. Raeburn and J. Taylor have constructed continuous-trace \Cs with
a prescribed Dixmier-Douady class, which also depend on the choice
of an open cover of the spectrum. We study the asymptotic behavior
of these algebras with respect to certain refinements of the cover
and appropriate extension of cocycles. This leads to the analysis of
a limit groupoid $G$ and a cocycle $\sigma$, and the algebra $C^*(G,
\sigma)$ may be regarded as a generalized direct limit of the
Raeburn-Taylor algebras. As a special case, all UHF \Cs arise from
this limit construction.
\end{abstract}
\maketitle

\section{Introduction}

In their study of continuous-trace \Cs, J. Dixmier and A.
Douady~\cite{dixmier-douady}, \cite{dixmier} introduced a complete
invariant classifying these algebras up to spectrum preserving
Morita equivalence. If $A$ is a conti\-nuous-trace \C with spectrum
$\Tspace$, then its Dixmier-Douady invariant $\delta(A)$ is an
element of the sheaf cohomology group $H^2(\Tspace,\torussheaf)$,
where $\torussheaf$ denotes the sheaf of germs of continuous
circle-valued functions on $\Tspace$. We remark that
$H^2(\Tspace,\torussheaf)$ is naturally isomorphic to the more
familiar cohomology group $H^3(\Tspace,\zz)$. Dixmier and Douady
also proved that given a locally compact paracompact Hausdorff space
$\Tspace$, every element of $H^2(\Tspace,\torussheaf) \simeq
H^3(\Tspace,\zz)$ is the Dixmier-Douady invariant of some
continuous-trace C$^*$-algebra with spectrum $\Tspace$. This lead
naturally to the study of the Brauer group, which is the set of
spectrum preserving Morita equivalence classes of continuous-trace
class $C^*$-algebras with spectrum $\Tspace$ endowed with the
multiplication induced by spectrum preserving tensor product. The
map $\delta: \brauer(\Tspace) \to H^2(\Tspace,\torussheaf)$ is in
fact a group isomorphism (for a modern treatment of the subject, see
\cite{raeburn-williams}).

The original proof of the surjectivity of $\delta$ used the
contractibility of the unitary group of an infinite dimensional
Hilbert space as well as Zorn's lemma, and therefore it did not lead
to explicit constructions. Later I. Raeburn and J. Taylor \cite{rt}
provided a more direct approach. We shall henceforth take $\Tspace$
to be compact. Given a finite open cover $U$ of $\Tspace$ and a
cocycle $\nu \in Z^2(U,\torussheaf)$, Raeburn and Taylor constructed
an algebra $A(U,\nu)$ with spectrum $\Tspace$ and Dixmier-Douady
invariant represented by $\nu$. In addition, they observed that
there exists a locally compact Hausdorff \'etale groupoid $\RTg_U$
and a circle-valued groupoid 2-cocycle $\tau \in Z^2(\RTg_U,
\torus)$ such that $A(U,\nu) \simeq C^*(\RTg_U,\tau)$.

In this work we explore the asymptotic behavior of the algebras
$C^{\ast}(\RTg_U,\tau)$, as one refines the cover $U$ while
retaining a certain compatibility of cocycles. This is accomplished
in terms of an \'etale groupoid $G$ and an appropriate cocycle
$\sigma$. More precisely, we say that $\mathcal{W}$ is an
intersection refinement of $\mathcal{U}$, denoted $\mathcal{U} \leq
\mathcal{W}$, if there exists a cover $\mathcal{V}$ such that
$$\mathcal{W} = \mathcal{U} \cap \mathcal{V} = \{U_i \cap V_j~|~U_i
\in \mathcal{U},V_j \in \mathcal{V}\} .$$ Any sequence of open
covers $\{V^{(k)}\}_{k=1}^\infty$ gives rise to a sequence of
intersection refinements $$W^{(0)} \leq W^{(1)} \leq W^{(2)} \leq
... $$ of $\Tspace$, where $W^{(i+1)} = W^{(i)}\cap V^{(i+1)}$. We
denote $G_n = \RTg_{W^{(n)}}$, and from this sequence we construct
the groupoid $G$.

In order to carry out this construction, and in particular to define
the compatibility of the cocycles, one is lead naturally to
technical considerations. The naive compatibility condition, namely
that the sequence $\sigma_n \in Z^2(G_n,\torus)$ be generated from
pullbacks along the projection maps $G_{n+1} \to G_n$, is
insufficient, and a stronger condition is needed. In fact, we are
compelled to carry out a careful analysis of several associated
groupoids which are not \'etale, ultimately leading to the \'etale
groupoid $G$ and limit cocycle $\sigma$.

Our main result states the following (see Theorems \ref{thm:properties-G} and \ref{thm:main}).
Recall that the support of a set $S \subseteq C^*(G, \sigma)$ is the set of points in $G$
for which there is an element of $S$ that does not vanish as a function in $C_0(G)$.

\bigskip
\noindent \textbf{Main Theorem.} \textit{ Let $G_n$ be a sequence of
groupoids corresponding to a sequence of intersection refinements and $\sigma_n\in Z^2(G_n,
\torus)$ an associated sequence of compatible cocycles. There exists
a locally compact Hausdorff principal amenable \'etale groupoid $G$
and a cocycle $\sigma \in Z^2(G, \torus)$ as well as a sequence of
isometric $*$-homomorphisms $\varphi_n: C^*(G_n, \sigma_n) \to
C^*(G, \sigma)$ such that $G$ is the support of $\bigcup
\varphi_n(C^*(G_n, \sigma_n))$. }
\bigskip

In the particular case when $\Tspace$ is a point, $C^*(G)$ is a UHF
algebra (i.e. a direct limit of finite-dimensional \Cs), and
moreover all UHF algebras arise from an appropriate choice of
infinite intersection refinement. Furthermore, in that case we have
in fact that $C^*(G)$ is the direct limit of the algebras
$C^*(G_n)$.

In a more general setting the main issue is that we cannot construct
maps from $C^{\ast}(G_n,\sigma_n)$ to
$C^{\ast}(G_{n+1},\sigma_{n+1})$, and moreover  the set $S=\bigcup
\varphi_n(C^*(G_n, \sigma_n))$ is not always dense inside $C^*(G,
\sigma)$. Therefore we do not have a direct limit. However, $G$ is
the support of $S$, and we regard our construction as a generalized
direct limit, especially in light of the work of P. Muhly and B.
Solel \cite{muhly-solel-subalgs-groupoid-algs}.

Apart from presenting our main results, we study the properties of
the groupoid $G$ and several other groupoids related to covers and
to sequences of refinements.

\section{Preliminaries}

Let $G$ be a second countable locally compact Hausdorff principal
groupoid.  We shall denote the unit space of $G$ by $G^{(0)}$ and
the set of composable pairs by $G^{(2)}$. We shall also denote by
$r$ and $d$ the range and source maps, respectively, and set
$G^u=\{ x\in G : r(x)=u\}$ and $G_u=\{ x\in G : d(x)=u\}$ for all
$u\in G^{(0)}$. The groupoid $G$ is said to be
\textbf{r-discrete} if $G^{(0)}$ is open. We will say that $G$ is
an \textbf{\'etale} groupoid if $r$ is a local homeomorphism (We
remark that the notions of \'etale and r-discrete are a source of
confusion in the literature. We refer the reader to
\cite{resende} for a detailed discussion on the relations between
those definitions). If $G$ is an \etale groupoid, then the family
$\lambda= \{\lambda^u : u \in G^{(0)} \}$, where $\lambda^u$ is
counting measure on $G^u$, provides a continuous left Haar system
for $G$. We will always assume that \etale groupoids are endowed
with the counting Haar system.

We say that $H \subseteq G$ is a \textbf{subgroupoid} if it is
closed under the multiplication and inverse operations of $G$. We
emphasize that we do not require that $H$ and $G$ share the same
unit space.

Given two groupoids $G$ and $H$, a map $\phi: G \to H$ is a
\textbf{groupoid homomorphism} if for every composable pair
$(x,y)$, we have that $(\phi(x), \phi(y))$ is also composable,
$\phi(xy) = \phi(x) \phi(y)$, and for every $x\in G$,
$\phi(x^{-1}) = [\phi(x)]^{-1}.$

We shall say that $\sigma:G^{(2)}\to \mathbb{T}$ is a
\textbf{continuous 2-cocycle} if $\sigma$ is continuous and
$$
\sigma(x_0x_1,x_2) \sigma(x_0,x_1) = \sigma(x_1, x_2) \sigma(x_0,
x_1 x_2)
$$
for all $x_0, x_1, x_2 \in G$ such that $(x_0, x_1)$ and $(x_1,
x_2)$ are composable. We shall say that $\sigma:G^{(2)}\to
\mathbb{T}$ is a \textbf{continuous 2-coboundary} if there exists a
continuous function $\mu: G \to \mathbb{T}$ such that for all
$(x,y)\in G^{(2)}$,
$$
\sigma(x,y) = \mu(x) \mu(y) [\mu(xy)]^{-1}.
$$

The set $Z^2(G,\mathbb{T})$ of continuous 2-cocycles becomes a
group when it is endowed with the operation of pointwise
multiplication of $\mathbb{T}$-valued functions. The set
$B^2(G,\mathbb{T})$ of all continuous 2-coboundaries is a normal
subgroup, and we define the \textbf{second cohomology group} of
$G$ with coefficients in $\torus$ to be the quotient:
$$
H^2(G,\mathbb{T}) = Z^2(G,\mathbb{T}) / B^2(G,\mathbb{T}).
$$

Given a 2-cocycle $\sigma$, we define the $*$-algebra of functions
$C_c(G,\sigma)$ to be the set of functions $C_c(G)$ endowed with the
multiplication and involution
$$
(f \ast g)(x) = \int f(xy) g(y^{-1}) \sigma(xy, y^{-1})
d\lambda^{d(x)}(y),
$$
$$
f^*(x) = \overline{ f(x^{-1}) \sigma(x,x^{-1})}.
$$
For $f\in C_c(G,\sigma)$, set
$$
\| f \|_{I,r} = \sup_{u \in G^{(0)}} \int |f| d\lambda^u, \qquad \|
f \|_{I,d} = \sup_{u \in G^{(0)}} \int |f^*| d\lambda^u.
$$
We define a $*$-algebra norm $\| \cdot \|_I$ on $C_c(G,\sigma)$ by
the expression:
$$
\| f \|_I = \max\{ \| f \|_{I,r}, \| f \|_{I,d} \}.
$$
A \textbf{bounded representation} of $C_c(G,\sigma)$ on a Hilbert
space $H$ is a $*$-homomorphism  $\pi: C_c(G,\sigma) \to B(H)$ that
is nondegenerate, continuous (when $C_c(G,\sigma)$ is endowed with
the inductive limit topology and $B(H)$ has the weak operator
topology) and satisfies $\| \pi(f) \| \leq \| f\|_I$ for all $f \in
C_c(G,\sigma)$. The \textbf{full twisted groupoid $C^*$-algebra
$C^*(G,\sigma)$} is the closure of $C_c(G,\sigma)$ with respect to
the following $C^*$-norm:
$$
\| f \| = \sup \{ \| \pi(f) \|  : \pi \text{ is a bounded
representation of } C_c(G,\sigma) \}.
$$

Given a measure $\mu$ on $G^{(0)}$, we define a measure $\nu$ on $G$
by $\nu = \int_G \lambda^u d\mu(u)$ and write $\nu^{-1}$ for the
image of $\nu$ under inversion. Such a measure gives rise to a
bounded representation $\Ind_\mu$ of $C_c(G,\sigma)$ on
$L^2(\nu^{-1})$: for $f\in C_c(G,\sigma)$ and $\xi\in
L^2(\nu^{-1})$, set
$$
(\Ind_\mu(f)\xi)(x) =  \int_G f(xy) \xi(y^{-1}) \sigma(xy,y^{-1})
d\lambda^{d(x)}(y).
$$
The \textbf{reduced twisted groupoid $C^*$-algebra}
$C^*_{r}(G,\sigma)$ is the closure of $C_c(G,\sigma)$ with respect
to the following $C^*$-norm:
$$
\| f \|_{r} = \sup\{ \| \Ind_\mu(f)\| : \mu \text{ is a Radon
measure on } G^{(0)} \}.
$$
It is worth noting that by a disintegration argument (see
\cite{muhly-book-unpublished}):
$$
\| f \|_{r} = \sup\{ \| \Ind_\mu(f)\| : \mu \text{ is a unit point
mass on } G^{(0)} \}.
$$
In fact (see the comment in \cite{muhly-book-unpublished} following
the definition of the reduced norm), if $E$ is a saturating subset
of $G^{(0)}$ (i.e.  such that for every $u\in G^{(0)}$ there exists
$x\in G$ such that $r(x)=u$ and $d(x) \in E$), then:
\begin{equation}\label{eq:saturation-trick}
\| f \|_{r} = \sup\{ \| \Ind_\mu(f)\| : \mu \text{ is a unit point
mass on } E \}.
\end{equation}

We denote by $C_0(G)$ the Banach space of continuous functions on
$G$ which vanish at infinity. Renault (\cite{renault-book}, Proposition~II.4.2)
showed that if $G$ is an r-discrete groupoid with Haar system and
$\sigma$ a continuous 2-cocycle, then the injection $j:C_c(G) \to
C_0(G)$, extends to a norm-decreasing linear map
$j:C^*_{r}(G,\sigma) \to C_0(G)$ which is one-to-one. Therefore,
elements of $C^*_{r}(G,\sigma)$ can be viewed as continuous
functions on $G$.

We will say that a locally compact groupoid with continuous Haar
system $(G,\lambda)$ is \textbf{amenable} if there exists a net
$\{f_i\}$ in $C_c(G)$ such that:
\begin{description}
  \item[Am1] the functions $u \mapsto \int |f_i(y)|^2  d\lambda^{u}(y)$
  are uniformly bounded in the sup norm;

  \item[Am2] The functions $x \mapsto \int f_i(xy)\overline{f_i(y)}
  d\lambda^{d(x)}(y)$ converge to $1$ uniformly on any
  compact subset of $G$.
\end{description}

The property of amenability of $G$ guarantees that
$C^*_{r}(G,\sigma)$ is nuclear and moreover that $C^*_{r}(G,\sigma)$
and $C^*(G,\sigma)$ coincide.

Suppose that a topological groupoid $G$ is the union of an
increasing sequence of open subgroupoids $G_n$ all of which share
the unit space of $G$. In that case we will say that $G$ is the
inductive limit of the sequence $G_n$. When $G$ has a Haar
system, we assume that its restriction endows each element of the
sequence $G_n$ with a Haar system. The inductive limit of
amenable groupoids is amenable (see p.123 of \cite{renault-book}).

Let $G$ be a groupoid and let $X$ be a set. A \textbf{groupoid
action} of $G$ on $X$ (to the left) is given by a surjection
$r:X\rightarrow G^{(0)}$ and a map $(\gamma,x)\mapsto\gamma \cdot
x$ from $G*X:=\{ (\gamma,x) \ | \ d(\gamma)=r(x)\} $ to $X$,
satisfying
\begin{enumerate}
\item
$r(x) \cdot x=x$ for all $x\in X$.
\item
$r(\gamma \cdot x)=r(\gamma)$ for every  $(\gamma,x) \in G*X$.
\item
if  $(\gamma_1,x) \in G*X$ and  $(\gamma_2, \gamma_1) \in G^{(2)}$
then  $(\gamma_2\gamma_1,x), (\gamma_2,\gamma_1 \cdot x) \in G*X$
and $\gamma_2 \cdot (\gamma_1 \cdot x)= (\gamma_2\gamma_1) \cdot x$.
\end{enumerate}
We say that the (left) action is \textbf{free} when the equation
$\gamma \cdot x = x$ implies that $\gamma$ is a unit, namely
$\gamma = d(\gamma)=r(x)$. In the case where the set $X$ is
itself a groupoid, we will say that the action is
\textbf{transitive} if for every $x,y \in X$ such that
$d(x)=d(y)$, there exists $\gamma \in G$ such that $x=\gamma \cdot
y$. A groupoid homomorphism $\pi: X \rightarrow G$ is said to be
   \textbf{equivariant} with respect to the $G$-action if for every
   $(\gamma,x) \in G*X$ we have that $d(\gamma) = r(\pi(x))$ and
   $\pi(\gamma \cdot x) = \gamma \pi(x)$.

\section{Bundles of Glimm Groupoids}

Glimm groupoids are groupoids whose $C^*$-algebras are the well
known Glimm (or UHF) $C^*$-algebras. They serve as basic examples
in our study, however we present them here  in a slightly
non-standard fashion (compare with \cite{renault-book}).
Therefore we review some of their basic properties and we provide
proofs for which we could not find a convenient reference in the
literature.

Let $\nn$ denote the set $\{0,1,2,\dots\}$.

\begin{mydef}\label{def:admissible-set}
We shall say that a set $\Omega$ is \textbf{admissible} if there
exists $S \subseteq \nn$ of the form $S=\{0,1,2,\dots, n\}$ or
$S=\nn$ for which $\Omega \subseteq \prod_{k \in S} \nn$ and
furthermore $\Omega = \prod_{k\in S} \Omega_k$, where $\Omega_k
\subseteq \nn$ is finite and nonempty for every $k\in S$. We always
endow $\Omega$ with the (compact) product topology of the discrete
sets $\Omega_k$, $k\in S$.
\end{mydef}

It will be useful to regard the elements of $\Omega$
as sequences $\alpha=(\alpha_0,\alpha_1,\dots)$ or $(n+1)$-tuples
$\alpha=(\alpha_0,\alpha_1,\dots, \alpha_n)$.

We define an equivalence relation on an admissible set $\Omega$ as
follows. For $\alpha, \beta \in \Omega$, we say that
$$
\alpha \sim \beta \Leftrightarrow \alpha_k=\beta_k \text{ except for
finitely many values of } k.
$$
Notice that in the case when $\Omega$ is finite any two points are
equivalent. The aforementioned equivalence relation gives rise to a
principal groupoid
$$
\ruhf(\Omega) = \{ (\alpha, \beta) \in \Omega \times \Omega:  \alpha
\sim \beta \}
$$
where the pair $(\alpha, \beta)$, $(\gamma, \delta)$ is composable
if and only if $\beta=\gamma$, and $(\alpha, \beta)^{-1}=(\beta,
\alpha)$.

We define a topology on $\ruhf(\Omega)$ as follows. For every $(\alpha,
\beta)\in \ruhf(\Omega)$ and $N\in \nn$ such that $\alpha_n=\beta_n$ for
$n>N$, let
$$
\mathcal{O}_{(\alpha,\beta)}(N) = \{ (\gamma, \delta) \in
\ruhf(\Omega) : \forall k \leq N, \gamma_k=\alpha_k,
\delta_k=\beta_k \text{ and } \forall k> N, \gamma_k = \delta_k \}.
$$
The collection $\{ \mathcal{O}_\xi(N) : \xi \in \ruhf(\Omega), N \in \nn \}$ is
a basis for the topology of $\ruhf(\Omega)$. In terms of convergence of nets, a
net $(\alpha_\lambda, \beta_\lambda)$ in $\ruhf(\Omega)$ converges
to a point $(\alpha,\beta) \in \ruhf(\Omega)$ if and only if for
every $n\in\nn$ such that $\alpha_k=\beta_k$ for $k>n$ there exists
$\lambda_0$ such that for $\lambda\geq \lambda_0$,
$(\alpha_\lambda)_k=\alpha_k$ and $(\beta_\lambda)_k=\beta_k$ when
$k\leq n$ and $(\alpha_\lambda)_k=(\beta_\lambda)_k$ when $k>n$.

Note that the topology of $\ruhf(\Omega)$ has a countable basis
since
$\mathcal{O}_{(\alpha,\beta)}(N)=\mathcal{O}_{(\gamma,\delta)}(N)$
whenever $\alpha_k=\gamma_k$ and $\beta_k=\delta_k$ for $k\leq N$.

\begin{mydef} The principal topological groupoid $\ruhf(\Omega)$ will
be called the \textbf{Glimm groupoid of an admissible set
$\mathbf{\Omega}$}.
\end{mydef}

For example, when $\Omega$ is finite,
$\ruhf(\Omega)$ is the groupoid $\Omega \times \Omega$ corresponding to the
trivial equivalence relation on $\Omega$, and in particular it is compact. When
$\Omega$ is infinite this need not be the case. Nevertheless, its unit space is
still compact since the map $\alpha \mapsto (\alpha, \alpha)$ implements a
homeomorphism $\Omega \simeq \ruhf(\Omega)^{(0)}$.

\begin{prop} The Glimm groupoid of an admissible set $\Omega$ is a
locally compact Hausdorff groupoid which is principal and
\etale. Furthermore, the $C^*$-algebra $C^*(\ruhf(\Omega))$
is a Glimm algebra (also called uniformly hyperfinite or UHF).
\end{prop}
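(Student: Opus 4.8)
The plan is to dispatch the purely groupoid-theoretic assertions first and then realize $C^*(\ruhf(\Omega))$ as an inductive limit of elementary algebras. That $\ruhf(\Omega)$ is \emph{principal} is immediate from its definition as the groupoid of the equivalence relation $\sim$: the map $(r,d)$ is injective. For local compactness I would observe that each basic open set $\mathcal{O}_{(\alpha,\beta)}(N)$ is, via $(\gamma,\delta)\mapsto(\gamma_k)_{k>N}$, homeomorphic to the product $\prod_{k>N}\Omega_k$ of finite discrete spaces, which is compact; thus every point has a compact neighborhood. For the Hausdorff property I would separate distinct points $(\alpha,\beta)\neq(\gamma,\delta)$ by choosing $N$ so large that $\alpha_k=\beta_k$ and $\gamma_k=\delta_k$ for $k>N$ while the two points already disagree in some coordinate $\le N$; the neighborhoods $\mathcal{O}_{(\alpha,\beta)}(N)$ and $\mathcal{O}_{(\gamma,\delta)}(N)$ are then disjoint.

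To see that $\ruhf(\Omega)$ is \etale I would check that $r$ is a local homeomorphism. On $\mathcal{O}_{(\alpha,\beta)}(N)$ the second component is determined by the first (set $\delta_k=\beta_k$ for $k\le N$ and $\delta_k=\gamma_k$ for $k>N$), so $r$ restricts to a continuous open bijection of $\mathcal{O}_{(\alpha,\beta)}(N)$ onto the cylinder $\{\gamma\in\Omega:\gamma_k=\alpha_k,\ k\le N\}$, which is open in $\Omega\cong\ruhf(\Omega)^{(0)}$. Hence $r$ is a local homeomorphism, and the counting measures furnish the Haar system.

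For the $C^*$-algebra I would write $\ruhf(\Omega)$ as an increasing union of open subgroupoids
\[ G_N=\{(\alpha,\beta)\in\ruhf(\Omega):\alpha_k=\beta_k\ \text{for all}\ k>N\}, \]
all sharing the unit space $\Omega$; their union is $\ruhf(\Omega)$ precisely because $\sim$ is eventual agreement. Writing $F_N=\prod_{k\le N}\Omega_k$ and $Y_N=\prod_{k>N}\Omega_k$, the groupoid $G_N$ is the product of the transitive (full) equivalence relation on the finite set $F_N$ with the trivial (unit) groupoid on $Y_N$; it is compact (as $F_N$ is finite and $Y_N$ compact), hence amenable, and $C^*(G_N)\cong M_{d_N}\otimes C(Y_N)$ with $d_N=\prod_{k\le N}|\Omega_k|$. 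Since $\ruhf(\Omega)$ is the inductive limit of the amenable $G_N$, it is amenable, so its full and reduced norms coincide and $C^*(\ruhf(\Omega))=\varinjlim C^*(G_N)$; the open inclusions $G_N\hookrightarrow G_{N+1}$ induce isometric embeddings by the point-mass description of the reduced norm together with Renault's injection $j$.

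It remains to identify the limit. Using $Y_N=\Omega_{N+1}\times Y_{N+1}$ and $M_{d_{N+1}}=M_{d_N}\otimes M_{|\Omega_{N+1}|}$, the connecting map $C^*(G_N)\to C^*(G_{N+1})$ is $\mathrm{id}\otimes\iota\otimes\mathrm{id}$, where $\iota\colon C(\Omega_{N+1})=\cc^{|\Omega_{N+1}|}\hookrightarrow M_{|\Omega_{N+1}|}$ is the diagonal embedding. Comparing with $\bigotimes_{k\in S}M_{|\Omega_k|}=\varinjlim M_{d_N}$, I would define compatible injections $M_{d_N}\otimes C(Y_N)\to\bigotimes_{k\in S}M_{|\Omega_k|}$ by including $M_{d_N}$ as the first $N+1$ tensor factors and embedding $C(Y_N)=\bigotimes_{k>N}\cc^{|\Omega_k|}$ diagonally into $\bigotimes_{k>N}M_{|\Omega_k|}$; these assemble to an isomorphism onto $\bigotimes_{k\in S}M_{|\Omega_k|}$, since their combined image is closed and contains the dense subalgebra $\bigcup_N M_{d_N}\otimes 1$. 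Thus $C^*(\ruhf(\Omega))$ is a Glimm (UHF) algebra, and when $\Omega$ is finite this degenerates to $\ruhf(\Omega)=\Omega\times\Omega$ and $C^*(\ruhf(\Omega))\cong M_{|\Omega|}$. The main obstacle is this final limit computation: one must verify that the commutative tails $C(Y_N)$ are absorbed diagonally into the matrix factors so that nothing commutative survives in the limit, and one must justify that the inductive limit of open subgroupoids is mirrored at the level of full $C^*$-algebras, which is exactly where amenability (coincidence of the full and reduced norms) enters.
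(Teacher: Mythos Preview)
Your argument is correct. The paper's own proof is far more terse: it simply declares the groupoid-theoretic properties a ``straightforward verification'' and, for the identification of $C^*(\ruhf(\Omega))$ as a UHF algebra, cites Renault's treatment of Glimm and AF groupoids in \cite{renault-book} (p.~128 ff.). What you have written is essentially an explicit unpacking of that citation: the filtration $G_N=\{(\alpha,\beta):\alpha_k=\beta_k\ \text{for }k>N\}$ by compact open subgroupoids is exactly the structure Renault exploits, and your identification $C^*(G_N)\cong M_{d_N}\otimes C(Y_N)$ together with the diagonal absorption of the commutative tails is the concrete computation behind his abstract AF machinery. Your approach buys self-containment and makes visible why nothing commutative survives in the limit; the paper's approach buys brevity at the cost of a black-box reference. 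Substantively the two routes coincide.
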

\begin{proof}
This is a straightforward verification, except for the statement
regarding the nature of the $C^*$-algebra,
which follows from Renault's study of Glimm groupoids and AF-groupoids in
\cite{renault-book} (see p.128 and results thereafter).
\end{proof}

\textbf{Let $\Tspace$ be a second countable compact Hausdorff space.}

\begin{mydef} The \textbf{bundle of Glimm groupoids of an admissible set
$\Omega$ over $\Tspace$}, which
will be denoted by $\Tspace \ast \ruhf(\Omega)$, is the set $\Tspace
\times \ruhf(\Omega)$ endowed with the pointwise groupoid structure
and the product topology. It will be convenient to work with
$\Tspace \ast \ruhf(\Omega)$ in the following presentation:
$$
\Tspace \ast \ruhf(\Omega) = \{ (\alpha, t, \beta) : t \in
\Tspace, (\alpha, \beta) \in \ruhf(\Omega) \}
$$
In this notation a pair $(\alpha, t, \beta)$, $(\gamma, s, \delta)$
is composable if and only if $\beta=\gamma$ and $t=s$, in which case
their product is $(\alpha, t, \delta)$, and $(\alpha, t,
\beta)^{-1}= (\beta, t, \alpha)$.
\end{mydef}

The groupoid $\Tspace \ast \ruhf(\Omega)$ arises naturally from the
equivalence relation on $\Tspace \times \Omega$ which is defined by
$(t,\alpha) \sim (s, \beta)$ if and only if $t=s$ and $\alpha \sim
\beta$. In particular, its unit space is given by
$\{(\alpha,t,\alpha) : t \in \Tspace, \alpha \in \Omega \} = \Tspace
\ast \ruhf^{(0)}(\Omega)$.

We collect many of the properties of $\Truhf{\Omega}$ in the following
proposition, although we omit its proof since it is a routine verification.
Recall that locally compact second countable Hausdorff spaces are metrizable by
Urysohn's metrization theorem (see \cite{kelley} pp. 125 and 147).

\begin{prop}\label{prop:ruhf} Given an admissible set $\Omega$ and  a
compact second countable Hausdorff space $\Tspace$, the groupoid
$\Tspace \ast \ruhf(\Omega)$ is locally compact, second
countable, Hausdorff, metrizable, principal and \etale.
Furthermore, its unit space is compact since it is naturally
homeomorphic to $\Tspace \times \Omega$.
\end{prop}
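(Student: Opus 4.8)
The plan is to exploit the fact that, as a topological groupoid, $\Truhf{\Omega}$ is nothing but the direct product of the Glimm groupoid $\ruhf(\Omega)$ with the space $\Tspace$ regarded as a groupoid all of whose elements are units (so that $\Tspace=\Tspace^{(0)}$ and $t,s$ are composable precisely when $t=s$). Concretely, identifying $(\alpha,t,\beta)$ with $(t,(\alpha,\beta))\in\Tspace\times\ruhf(\Omega)$, the underlying space of $\Truhf{\Omega}$ is the topological product $\Tspace\times\ruhf(\Omega)$, multiplication and inversion act coordinatewise, and the range and source maps factor as $r=\id_{\Tspace}\times r_{\ruhf}$ and $d=\id_{\Tspace}\times d_{\ruhf}$. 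Since the preceding Proposition already establishes that $\ruhf(\Omega)$ is locally compact, Hausdorff, principal and \etale, and since we noted earlier that its topology has a countable basis, each required property of $\Truhf{\Omega}$ will follow by combining the corresponding property of $\Tspace$ with that of $\ruhf(\Omega)$ under a finite product.

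For the topological assertions I would argue as follows. By hypothesis $\Tspace$ is compact Hausdorff and second countable, hence in particular locally compact, while $\ruhf(\Omega)$ carries all of these properties: local compactness and Hausdorffness from the previous Proposition, and second countability from the observation that for each fixed $N$ there are only finitely many distinct basic sets $\mathcal{O}_{(\alpha,\beta)}(N)$, so that the resulting basis is countable. A finite product of locally compact Hausdorff spaces is again locally compact Hausdorff, and a finite product of second countable spaces is second countable, so $\Tspace\times\ruhf(\Omega)$ inherits all three. Metrizability is then immediate from the metrization theorem recalled before the statement, since $\Tspace\times\ruhf(\Omega)$ is locally compact, second countable and Hausdorff.

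For the groupoid-theoretic assertions, principality follows because $\Truhf{\Omega}$ arises from the equivalence relation on $\Tspace\times\Omega$ given by $(t,\alpha)\sim(s,\beta)$ iff $t=s$ and $\alpha\sim\beta$; equivalently, the map $(\alpha,t,\beta)\mapsto\bigl((\alpha,t,\alpha),(\beta,t,\beta)\bigr)$ recovering the range and source is injective, so the isotropy is trivial. For the \etale property I would verify that the range map is a local homeomorphism: writing $r=\id_{\Tspace}\times r_{\ruhf}$ and using that $r_{\ruhf}$ is a local homeomorphism (as $\ruhf(\Omega)$ is \etale) while $\id_{\Tspace}$ is a homeomorphism, the product map is a local homeomorphism. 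Finally, the unit space $\{(\alpha,t,\alpha):t\in\Tspace,\ \alpha\in\Omega\}$ is carried homeomorphically onto $\Tspace\times\Omega$ by $(\alpha,t,\alpha)\mapsto(t,\alpha)$; since $\Omega$ is a finite product of finite discrete sets it is compact, whence $\Tspace\times\Omega$ is compact, proving the last assertion.

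Because every claim reduces to pairing a known property of $\Tspace$ with one already proved for $\ruhf(\Omega)$, I do not expect a genuine obstacle, which is consistent with the authors omitting the proof. If any point deserves care, it is checking that the product topology really does make $r=\id_{\Tspace}\times r_{\ruhf}$ a local homeomorphism onto the unit space with its subspace topology, so that the \etale structure is compatible with the bundle presentation, and, secondarily, confirming that local compactness survives the product when $\ruhf(\Omega)$ is noncompact, i.e. when $\Omega$ is infinite. Both are handled by the standard facts that finite products of locally compact Hausdorff spaces are locally compact and that local homeomorphisms are stable under taking products with homeomorphisms.
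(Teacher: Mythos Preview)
Your argument is correct and is exactly the routine verification the authors have in mind; indeed the paper omits the proof entirely, saying only that it ``is a routine verification.'' One small slip: you write that $\Omega$ is ``a finite product of finite discrete sets,'' but by Definition~\ref{def:admissible-set} the index set $S$ may be all of $\nn$, so $\Omega$ can be a countably infinite product; compactness still follows (by Tychonoff, or as the paper notes parenthetically when defining admissible sets), so the conclusion is unaffected.
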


We now address the issue of amenability. When $\Omega$ is finite, as we have
remarked, the groupoid $\ruhf(\Omega)$ is compact. Therefore the \'etale
groupoid $\Truhf{\Omega}$ is compact, and it follows that it is amenable.

We now turn to the case when $\Omega$ is infinite and $\Omega =
\prod_{k\in\nn} \Omega_k$.

We introduce a bit more notation. For every $n\in\nn$, we will denote
$\Omega^{(n)}=\prod_{k=0}^n \Omega_k.$ Note that $\Omega^{(n)}$ is also an
admissible set.  Given $\alpha \in \Omega$ and $n\in\nn$, we shall denote
$\alpha|n = ( \alpha_0, \alpha_1, \dots,
\alpha_n )$. Notice that, using this notation, we have $\Omega^{(n)} =
\{ \alpha|n : \alpha \in \Omega \}$.

\begin{mydef} For each $n\geq 0$, denote by $\XX_n$ the subset of $\Truhf{\Omega}$
given by
$$
\XX_n= \{ (\alpha, t, \beta) \in  \Truhf{\Omega} : \alpha_k=\beta_k \text{ for }
k> n \}.
$$
\end{mydef}

Notice that the sets $\XX_n$ are nested: $\XX_n \subseteq
\XX_{n+1}$, and that $\Truhf{\Omega} =\bigcup_{n\in\nn} \XX_n$.

\begin{remark}\label{rem:conv-Truhf}
We can restate the convergence in $\Truhf{\Omega}$ as follows. We will denote by
$p_\Tspace: \Truhf{\Omega} \to \Tspace$ and $p_n:\Truhf{\Omega} \to
\ruhf(\Omega^{(n)})$ the canonical
projections given by $p_\Tspace(\alpha,t,\beta)=t$ and
$p_n(\alpha,t,\beta)=(\alpha|n, \beta|n)$. A net $\xi_\lambda$
converges to $\xi$ in $\Truhf{\Omega}$ if and only if
\begin{enumerate}
\item $p_\Tspace(\xi_\lambda) \to p_\Tspace(\xi)$.

\item $\forall n \in \nn$ such that $\xi \in \XX_n$, there exists $\lambda_0$
such that for any $\lambda \geq \lambda_0$, $\xi_\lambda \in \XX_n$ and
$p_n(\xi_\lambda) =p_n(\xi)$.
\end{enumerate}
We also note that the projections $p_\Tspace$ and $p_n$ are
continuous.
\end{remark}

\begin{lemma}\label{lemma:XX-properties}
For every $n\geq 0$ the set $\XX_n$ is a  compact open amenable \'etale
subgroupoid of $\Truhf{\Omega}$ such that $\XX_n^{(0)}=[\Truhf{\Omega}]^{(0)}$.
\end{lemma}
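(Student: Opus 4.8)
The plan is to dispose of the purely algebraic assertions first and then to concentrate on the topological content, where the real work lies. That $\XX_n$ is a subgroupoid is immediate from its defining condition: if $\alpha_k=\beta_k$ for $k>n$ then the inverse $(\beta,t,\alpha)$ obviously satisfies the same condition, and if $(\alpha,t,\beta)$ and $(\beta,t,\delta)$ both lie in $\XX_n$ then $\alpha_k=\beta_k=\delta_k$ for $k>n$, so the product $(\alpha,t,\delta)$ lies in $\XX_n$. For the unit space, each unit $(\alpha,t,\alpha)$ of $\Truhf{\Omega}$ trivially satisfies $\alpha_k=\alpha_k$ for $k>n$ and hence belongs to $\XX_n$; conversely the only elements of $\XX_n$ that are units are exactly these diagonal triples, so $\XX_n^{(0)}=[\Truhf{\Omega}]^{(0)}$. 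For openness I would write $\XX_n=\Tspace\times R_n$ with $R_n=\{(\alpha,\beta)\in\ruhf(\Omega):\alpha_k=\beta_k \text{ for } k>n\}$ and check that $R_n$ is open: for $(\alpha,\beta)\in R_n$ the basic neighbourhood $\mathcal{O}_{(\alpha,\beta)}(n)$ is contained in $R_n$, since every $(\gamma,\delta)\in\mathcal{O}_{(\alpha,\beta)}(n)$ satisfies $\gamma_k=\delta_k$ for $k>n$ by definition.

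Compactness is the heart of the matter, and the step I expect to require the most care. The key observation is that on $R_n$ the subspace topology inherited from $\ruhf(\Omega)$ coincides with the subspace topology inherited from the ordinary product topology on $\Omega\times\Omega$. To justify this I would compare neighbourhood bases: for $(\alpha,\beta)\in R_n$ and any $N\geq n$, intersecting $\mathcal{O}_{(\alpha,\beta)}(N)$ with $R_n$ yields exactly $\{(\gamma,\delta)\in R_n:\gamma_k=\alpha_k,\ \delta_k=\beta_k \text{ for } k\leq N\}$, because the tail condition $\gamma_k=\delta_k$ for $k>N$ is automatic on $R_n$ once $N\geq n$; letting $N$ vary, these are precisely the basic product-topology neighbourhoods. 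Once the topologies are identified, $R_n=\bigcap_{k>n}\{(\alpha,\beta):\alpha_k=\beta_k\}$ is a closed subset of the compact space $\Omega\times\Omega$ (each condition $\alpha_k=\beta_k$ is the preimage of the clopen diagonal of $\Omega_k\times\Omega_k$ under a continuous coordinate map), hence compact; therefore $\XX_n=\Tspace\times R_n$ is compact.

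It then remains to verify the \etale and amenability properties. Being an open subgroupoid of the \etale groupoid $\Truhf{\Omega}$ (Proposition~\ref{prop:ruhf}), $\XX_n$ inherits \etaleness: the restriction of the local homeomorphism $r$ to the open set $\XX_n$ is again a local homeomorphism, and its image is the open set $\XX_n^{(0)}=[\Truhf{\Omega}]^{(0)}=\XX_n\cap[\Truhf{\Omega}]^{(0)}$. For amenability I would exploit that the range fibres are uniformly finite: for each unit $u=(\alpha,t,\alpha)$ one has $\XX_n^u=\{(\alpha,t,\beta):\beta_k=\alpha_k \text{ for } k>n\}$, which is in bijection with $\Omega^{(n)}$ and hence has exactly $m:=|\Omega^{(n)}|$ elements. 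Since $\XX_n$ is compact, the constant function $f=m^{-1/2}\,\charac_{\XX_n}$ lies in $C_c(\XX_n)$, and a direct computation gives $\int|f|^2\,d\lambda^u=1$ and $\int f(xy)\overline{f(y)}\,d\lambda^{d(x)}(y)=1$ for every $x\in\XX_n$ (using that $xy\in\XX_n$ by the subgroupoid property). Thus the constant net $\{f\}$ witnesses conditions \textbf{Am1} and \textbf{Am2}, establishing amenability and completing the proof.
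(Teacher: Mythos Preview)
Your argument is correct. The algebraic parts, openness, and the \'etale verification match the paper's proof essentially verbatim. Two points of execution differ slightly. For compactness, the paper produces a continuous injection $j:Q_n\to\ruhf(\Omega)$ from the compact product $Q_n=\Omega^{(n)}\times\Omega^{(n)}\times\prod_{k>n}\Omega_k$ via $j(x,y,\omega)=(x\omega,y\omega)$, notes that $\XX_n\subseteq\Tspace\times j(Q_n)$, and separately checks closedness; your identification of the $\ruhf(\Omega)$-subspace topology on $R_n$ with the product-subspace topology is really the same observation (continuity of $j$ amounts to exactly your neighbourhood-basis comparison), just packaged without the auxiliary map. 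For amenability the paper simply says ``$\XX_n$ is amenable because it is compact,'' whereas you supply the explicit witness $f=m^{-1/2}\charac_{\XX_n}$; your computation is correct and has the virtue of being self-contained, while the paper's one-liner relies on the standard fact that compact \'etale groupoids are amenable.
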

\begin{proof} It is clear that $\XX_n$ is a subgroupoid of $\Truhf{\Omega}$,
and $\XX_n$ contains  $[\Truhf{\Omega}]^{(0)}$,
hence $\XX_n^{(0)}=[\Truhf{\Omega}]^{(0)}$. It follows immediately from
Remark~\ref{rem:conv-Truhf} (2) that $\XX_n$ is open. In order to show that
$\XX_n$ is closed, suppose that
$\xi_\lambda \to \xi$ and $\xi_\lambda \in \XX_n$ for all $\lambda$.
Since $\xi \in \Truhf{\Omega} = \cup_{n\in\nn} \XX_n, \exists N\in \nn$ such
that $\xi \in \XX_N$. If $N\leq n$ we have that $\xi
\in \XX_N \subseteq \XX_n$ and in this case we are done. Now assume $N>n$. There
exists $\lambda_0$ such
that for $\lambda\geq \lambda_0$, $\xi_\lambda \in \XX_N$
and $p_N(\xi_\lambda) = p_N(\xi)$.  Thus, if
$\xi_\lambda=(\alpha_\lambda, t_\lambda, \beta_\lambda)$ and $\xi =
(\alpha, t, \beta)$, we have for $\lambda \geq \lambda_0$ that
$\alpha_k = (\alpha_\lambda)_k = (\beta_\lambda)_k = \beta_k$
for $k=n+1, \dots, N$. Hence, $\xi \in \XX_n$.

Let $Q_n= \Omega^{(n)} \times  \Omega^{(n)} \times
\prod_{k=n+1}^\infty \Omega_k$ endowed with the product topology,
with respect to which it is compact. Let $j: Q_n \to
\ruhf(\Omega)$ be the map given by $j(x,y,\omega)=(x\omega,
y\omega)$, where $x\omega$ denotes the concatenation of $x$ and
$\omega$. This map is clearly injective and it is continuous,
therefore $j(Q_n)$ is compact in $\ruhf(\Omega)$. Now observe that
$\XX_n \subseteq \Tspace \times j(Q_n)$, hence it is a closed
subset of a compact set of $\Truhf{\Omega}$ and we conclude that
it is compact.

The groupoid $\XX_n$ is \'etale because it is an open subgroupoid
of an \'etale groupoid. Since $r:\Truhf{\Omega} \to
[\Truhf{\Omega}]^{(0)} $ is a local homeomorphism, its
restriction to the open set $\XX_n$ is still a local
homeomorphism.

Finally, $\XX_n$ is amenable because it is compact.
\end{proof}

In summary, we have shown that $\Truhf{\Omega}$ is an inductive
limit of compact open amenable subgroupoids, and it follows in
particular that it is also amenable. We have proven the following
result.

\begin{prop}\label{prop:Truhf-amenable} Given an admissible set $\Omega$ and  a
compact second countable Hausdorff space $\Tspace$, the groupoid
$\Truhf{\Omega}$ is amenable.
\end{prop}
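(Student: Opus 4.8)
The plan is to treat the two cases of Definition~\ref{def:admissible-set} separately and, in the infinite case, to realize $\Truhf{\Omega}$ as an inductive limit of amenable groupoids so that the stability of amenability under such limits applies.

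When $\Omega$ is finite, the groupoid $\ruhf(\Omega)=\Omega\times\Omega$ is compact, and by Proposition~\ref{prop:ruhf} the unit space of $\Truhf{\Omega}$ is compact; in fact $\Truhf{\Omega}$ is itself compact in this case. Since a compact \etale groupoid equipped with its counting Haar system is amenable, the claim holds immediately.

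When $\Omega=\prod_{k\in\nn}\Omega_k$ is infinite, I would use the nested family $\{\XX_n\}_{n\in\nn}$. By Lemma~\ref{lemma:XX-properties}, each $\XX_n$ is a compact open amenable \etale subgroupoid of $\Truhf{\Omega}$ with $\XX_n^{(0)}=[\Truhf{\Omega}]^{(0)}$. Because $\XX_n\subseteq\XX_{n+1}$ and $\Truhf{\Omega}=\bigcup_{n\in\nn}\XX_n$, this exhibits $\Truhf{\Omega}$ as the union of an increasing sequence of open subgroupoids sharing its unit space, that is, as the inductive limit of the $\XX_n$ in the sense recalled in the Preliminaries. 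Restricting the counting Haar system of $\Truhf{\Omega}$ endows each $\XX_n$ with a Haar system, so all the data needed to speak of this inductive limit are in place.

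The conclusion then follows by invoking the fact (p.123 of \cite{renault-book}, recalled above) that the inductive limit of amenable groupoids is amenable: since every $\XX_n$ is amenable, so is $\Truhf{\Omega}$. I do not expect a genuine obstacle at this stage, as the substantive work has already been carried out in Lemma~\ref{lemma:XX-properties}; the only point requiring care is confirming that the sequence $\XX_n$ meets the precise definition of an inductive limit (open subgroupoids, common unit space, increasing union), all of which are immediate from the lemma together with the nesting $\XX_n\subseteq\XX_{n+1}$. The heart of the matter, namely the compactness and hence amenability of each $\XX_n$ (obtained by realizing $\XX_n$ as a closed subset of a compact set), resides in that lemma rather than in the proposition itself.
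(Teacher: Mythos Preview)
Your proposal is correct and follows essentially the same approach as the paper: the finite case is handled by compactness, and the infinite case by exhibiting $\Truhf{\Omega}$ as the inductive limit of the amenable subgroupoids $\XX_n$ via Lemma~\ref{lemma:XX-properties} and then invoking the stability of amenability under inductive limits. The paper's argument is in fact presented in the discussion and lemma preceding the proposition, exactly as you outline.
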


Observe that when $\Tspace=\{ \text{pt} \}$, we have that
$\Truhf{\Omega} \simeq \ruhf(\Omega)$. In particular, the latter
is amenable.

Although we chose an alternative path, it is possible to define
the topology of $\Truhf{\Omega}$ in terms of the inductive limit
of the sequence $\XX_n$ (see \cite{renault-book}, p. 122).

\section{Groupoids for Ordered Cover refinements}

\begin{mydef}
Let $\mathcal{U}$ and $\mathcal{W}$ be open covers of $\Tspace$.
We say that $\mathcal{W}$ is an \textbf{intersection refinement}
of $\mathcal{U}$ and denote it $\mathcal{U} \leq \mathcal{W}$ if
there exists an open cover $\mathcal{V}$ such that $\mathcal{W} =
\mathcal{U} \cap \mathcal{V} = \{U_i \cap V_j~|~U_i \in
\mathcal{U},V_j \in \mathcal{V}\}$.
\end{mydef}

It is easy to see that the set of intersection refinements is cofinal
in the set of all refinements.

Given any family of open covers $V = \{
V^{(0)},V^{(1)},V^{(2)}...\}$, we obtain a sequence of
intersection refinements $W^{(0)} \leq W^{(1)} \leq W^{(2)} \leq
... $ by defining $W^{(n)} = V^{(0)} \cap V^{(1)} \cap V^{(2)}
... \cap V^{(n)}$. We present this exact construction using the
notation and terminology we introduced in the previous section.

For each $k\in \nn$, let $V^{(k)} = \{ V^{(k)}_0, V^{(k)}_1, \dots
V^{(k)}_{n_k} \}$ be an open cover for $\mathcal{T}$. We emphasize
that we allow repetitions, i.e. $V^{(k)}_i = V^{(k)}_j$ for $i
\neq j$. The family $V = \{ V^{(0)},V^{(1)},V^{(2)}...\}$ is then
a family of open covers of $\mathcal{T}$. We can now apply the
terminology of the previous section. For each $k\in\nn$, denote
$\Omega_k=\{0, 1, \dots, n_k\}$ and let
$\Omega=\prod_{k=0}^\infty \Omega_k$ be the \textbf{admissible
set corresponding to $V$}. The set $\Omega$ is in fact admissible
in the sense of Definition~\ref{def:admissible-set}, as are the
sets $\Omega^{(n)}=\prod_{k=0}^n \Omega_k$, for every $n\in\nn$.

If $\alpha =(\alpha_0,\alpha_1,\alpha_2,\dots, \alpha_n) \in
\Omega^{(n)}$, we set:
$$
W_{\alpha} = \bigcap_{k=0}^n V^{(k)}_{\alpha_k}.
$$
Note that $W_{\alpha}$ is an open (possibly empty) set for every
$\alpha$, being a finite intersection of open sets. Moreover, for
every $n \in \nn$, $W^{(n)} = \{ W_{\alpha} : \alpha \in
\Omega^{(n)} \}$ is an open cover of $\mathcal{T}$, and $W^{(0)}
\leq W^{(1)} \leq W^{(2)} \leq ... $ is a sequence of
intersection refinements corresponding to $V$. In light of this,
we will call $V$ an \textbf{ordered cover refinement}.

When $\alpha =(\alpha_0,\alpha_1,\alpha_2,\dots) \in \Omega$, the
infinite intersection $ W_\alpha = \bigcap_{k=0}^\infty V^{(k)}_{\alpha_k}$ is
only a $G_\delta$ set. Thus, in this case $\{W_{\alpha} : \alpha \in \Omega \}$
is not an open cover of $\mathcal{T}$.

\begin{mydef}
For every $N\in\nn$, we define the groupoid
$$
G_N(V) = \{ (\alpha, t, \beta): \alpha,\beta \in \Omega^{(N)}, t \in
W_\alpha \cap W_\beta \}
$$
where a pair $(\alpha, t, \beta)$, $(\gamma, s, \delta)$ is
composable if and only if $\beta=\gamma$ and $t=s$, in which case
their product is $(\alpha, t, \delta)$, and $(\alpha, t,
\beta)^{-1}= (\beta, t, \alpha)$. The topology of $G_N(V)$ is the
relative topology from its natural inclusion into $\Tspace \ast
\ruhf(\Omega^{(N)})$.
\end{mydef}

The topology of $G_N(V)$  has as basis the collection of all
sets
$$
Z_{\alpha,\beta,U} = \{ (\alpha, t, \beta) \in G_N(V) : t \in U \}
$$
where $(\alpha, \beta) \in \ruhf(\Omega^{(N)})$ and $U\subseteq
\Tspace$ is open. The sets $Z_{\alpha,\beta, U}$ implicitly depend on $N$.

For every $N\in\nn$ the groupoid $G_N(V)$ arises naturally from
the restriction of the equivalence relation introduced on
$\Tspace \times \Omega^{(N)}$ to the set $\{ (t,\alpha) \in
\Tspace \times \Omega^{(N)} : t \in W_\alpha\}.$

The groupoid $G_N(V)$ is precisely the groupoid $\RTg(W^{(N)})$
corresponding to the Raeburn-Taylor \C of the finite open cover
$W^{(N)}$ of $\Tspace$. Presenting the elements in the form
$(\alpha,t,\beta)$, we keep track of all the covers
$V^{(0)},V^{(1)},V^{(2)},...,V^{(N)}$ from which $W^{(N)}$ was
obtained. The groupoid $G_N(V)$ has the following basic
properties.

\begin{prop}[\cite{rt}] For every $N\in\nn$ the groupoid $G_N(V)$ is
locally compact, Hausdorff, principal and \etale.
\end{prop}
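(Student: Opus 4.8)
The plan is to realize $G_N(V)$ as an open subgroupoid of the bundle of Glimm groupoids $\Truhf{\Omega^{(N)}}$ and then inherit each of the four properties from Proposition~\ref{prop:ruhf}, exactly in the spirit of the argument given for $\XX_n$ in Lemma~\ref{lemma:XX-properties}. The crucial preliminary observation is that $\Omega^{(N)}$ is a \emph{finite} admissible set, so that $\ruhf(\Omega^{(N)}) = \Omega^{(N)} \times \Omega^{(N)}$ carries the discrete topology; consequently $\Truhf{\Omega^{(N)}}$ is homeomorphic to the product of $\Tspace$ with the discrete finite set $\Omega^{(N)} \times \Omega^{(N)}$, and by Proposition~\ref{prop:ruhf} it is locally compact, Hausdorff, principal and \etale.

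First I would verify that $G_N(V)$ is a subgroupoid of $\Truhf{\Omega^{(N)}}$. Closure under inversion is immediate, since the defining condition $t \in W_\alpha \cap W_\beta$ is symmetric in $\alpha$ and $\beta$. Closure under composition follows because, when $(\alpha,t,\beta)$ and $(\beta,t,\delta)$ are composable with $t \in W_\alpha \cap W_\beta$ and $t \in W_\beta \cap W_\delta$, one has $t \in W_\alpha \cap W_\delta$, so the product $(\alpha,t,\delta)$ again lies in $G_N(V)$. In fact $G_N(V)$ is manifestly principal already at this stage, since it arises as the restriction of the equivalence relation on $\Tspace \times \Omega^{(N)}$ to the subset $\{(t,\alpha) : t \in W_\alpha\}$.

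Next I would show that $G_N(V)$ is open in $\Truhf{\Omega^{(N)}}$. For each fixed pair $(\alpha,\beta)$, the corresponding fiber of $G_N(V)$ is $(W_\alpha \cap W_\beta) \times \{(\alpha,\beta)\}$, which is open because $W_\alpha \cap W_\beta$ is a finite intersection of open subsets of $\Tspace$ while the point $(\alpha,\beta)$ is isolated in the discrete factor. Taking the union over all pairs exhibits $G_N(V)$ as open; this same computation confirms that the sets $Z_{\alpha,\beta,U}$ indeed form a basis for the subspace topology.

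With $G_N(V)$ identified as an open subgroupoid, the remaining properties follow by pure inheritance: it is Hausdorff as a subspace of a Hausdorff space, locally compact as an open subset of a locally compact Hausdorff space, and \etale because the range map of $\Truhf{\Omega^{(N)}}$ is a local homeomorphism whose restriction to the open set $G_N(V)$ remains one. I do not anticipate a genuine obstacle here, as every assertion is a structural consequence of the corresponding property of the ambient groupoid; the only step requiring real care is the openness verification, where one must explicitly invoke the finiteness of $\Omega^{(N)}$ to guarantee that the $\Omega^{(N)}$-factor is discrete.
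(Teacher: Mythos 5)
Your proof is correct; the paper itself states this proposition with a citation to Raeburn--Taylor and offers no proof, so there is nothing to diverge from. Your argument --- exhibiting $G_N(V)$ as an open subgroupoid of $\Truhf{\Omega^{(N)}}$ (using that $\Omega^{(N)}$ is finite, hence $\ruhf(\Omega^{(N)})$ is discrete) and inheriting local compactness, Hausdorffness, principality and the \'etale property --- is exactly the pattern the paper uses for the analogous verifications in Lemma~\ref{lemma:XX-properties} and for $\gcheck_N(V)$, and every step checks out.
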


\begin{mydef} For every $N\in\nn$, define the groupoid
$$
\gcheck_N(V) = \{ (\alpha, t, \beta): (\alpha,\beta) \in
\ruhf(\Omega^{(N)}), \; t \in \overline{W_{\alpha}} \cap
\overline{W_{\beta}} \}
$$
with the following operations: a pair $(\alpha, t, \beta)$,
$(\gamma, s, \delta)$ is composable if and only if $\beta=\gamma$
and $t=s$, in which case their product is $(\alpha, t, \delta)$, and
$(\alpha, t, \beta)^{-1}= (\beta, t, \alpha)$. The topology of
$\gcheck_N(V)$ is the relative topology from its natural inclusion
into $\Tspace \ast \ruhf(\Omega^{(N)})$.
\end{mydef}

The sets of the form
$$
\check{Z}_{\alpha, \beta, U} = \{ (\alpha, t, \beta) \in \gcheck_N(V) :
t\in U \}
$$
where $(\alpha, \beta) \in \ruhf(\Omega^{(N)})$ and $U \subseteq
\Tspace$ is open, constitute a basis for the topology of
$\gcheck_N(V)$.

\begin{prop}
For every $N\in\nn$, $\gcheck_N(V)$ is a compact Hausdorff
r-discrete principal groupoid. The embedding $J:G_N(V) \to
\gcheck_N(V)$ is a continuous open mapping and a groupoid
homomorphism.
\end{prop}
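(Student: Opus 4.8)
The plan is to dispatch each property separately, using throughout that $\Omega^{(N)}=\prod_{k=0}^N\Omega_k$ is finite, so that $\ruhf(\Omega^{(N)})=\Omega^{(N)}\times\Omega^{(N)}$ is finite and discrete and $\Tspace\ast\ruhf(\Omega^{(N)})$ is a compact Hausdorff space consisting of finitely many copies of $\Tspace$ indexed by the pairs $(\alpha,\beta)$. First I would confirm that the stated operations genuinely make $\gcheck_N(V)$ a groupoid: if $t\in\overline{W_\alpha}\cap\overline{W_\beta}$ and $t\in\overline{W_\beta}\cap\overline{W_\delta}$, then $t\in\overline{W_\alpha}\cap\overline{W_\delta}$, so products stay inside $\gcheck_N(V)$, and the analogous checks for inverses and units are immediate.

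For compactness and Hausdorffness I would realize $\gcheck_N(V)$ as a subspace of the compact Hausdorff space $\Tspace\ast\ruhf(\Omega^{(N)})$. Over each (isolated) pair $(\alpha,\beta)$ the fiber of $\gcheck_N(V)$ is $\overline{W_\alpha}\cap\overline{W_\beta}$, which is closed in $\Tspace$; since the index set is finite, $\gcheck_N(V)$ is closed, hence compact, and it inherits Hausdorffness from the ambient product. Principality follows as for the other equivalence-relation groupoids of the paper: the map $x\mapsto(r(x),d(x))$ sends $(\alpha,t,\beta)$ to $\bigl((\alpha,t,\alpha),(\beta,t,\beta)\bigr)$, which recovers $\alpha$, $t$ and $\beta$, so it is injective and the isotropy is trivial. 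For r-discreteness I would identify the unit space fiberwise: its $\alpha$-component is exactly $\check{Z}_{\alpha,\alpha,\Tspace}$, so $\gcheck_N(V)^{(0)}=\bigcup_\alpha\check{Z}_{\alpha,\alpha,\Tspace}$ is a union of basic open sets and therefore open.

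Finally I would analyze $J$, which is simply the set inclusion once one notes $G_N(V)\subseteq\gcheck_N(V)$ (indeed $W_\alpha\cap W_\beta\subseteq\overline{W_\alpha}\cap\overline{W_\beta}$ and every pair lies in $\ruhf(\Omega^{(N)})$); being an inclusion that preserves the common operations, $J$ is a groupoid homomorphism. Continuity I would check on the basis via $J^{-1}(\check{Z}_{\alpha,\beta,U})=Z_{\alpha,\beta,U}$. The one step that is not pure bookkeeping is openness of $J$: I would compute $J(Z_{\alpha,\beta,U})=\{(\alpha,t,\beta):t\in U\cap W_\alpha\cap W_\beta\}$ and then exploit that $W_\alpha\cap W_\beta$ is itself open, so that with $U'=U\cap W_\alpha\cap W_\beta$ and the identity $\overline{W_\alpha}\cap\overline{W_\beta}\cap W_\alpha\cap W_\beta=W_\alpha\cap W_\beta$ one obtains $J(Z_{\alpha,\beta,U})=\check{Z}_{\alpha,\beta,U'}$, a basic open set of $\gcheck_N(V)$. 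Hence $J$ sends basic open sets to open sets and is open. I expect this absorption of the open fiber $W_\alpha\cap W_\beta$ into the parameter of a $\check{Z}$-set to be the only genuinely nonautomatic point; all remaining verifications are routine consequences of the finiteness and discreteness of $\Omega^{(N)}$.
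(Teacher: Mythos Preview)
Your proposal is correct and follows essentially the same route as the paper: closedness in $\Tspace\ast\ruhf(\Omega^{(N)})$ gives compactness and Hausdorffness, the union $\bigcup_\alpha\check{Z}_{\alpha,\alpha,\Tspace}$ gives r-discreteness, and the openness of $J$ hinges on the observation that $W_\alpha\cap W_\beta$ is open so that it can be absorbed into the $U$-parameter of a $\check{Z}$-set. The only cosmetic difference is that the paper phrases the openness argument by starting from an arbitrary open $W\subseteq G_N(V)$ and shrinking to a basic neighborhood $Z_{\alpha,\beta,U}$ with $U\subseteq W_\alpha\cap W_\beta$ (so that $Z_{\alpha,\beta,U}=\check{Z}_{\alpha,\beta,U}$ on the nose), whereas you compute $J(Z_{\alpha,\beta,U})=\check{Z}_{\alpha,\beta,U'}$ directly; the content is identical.
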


\begin{proof}
The groupoid $\gcheck_N(V)$ arises from the restriction of the
equivalence relation of $\Tspace \times \Omega$ to the set $\{
(t,\alpha) \in \Tspace \times \Omega^{(N)} : t \in
\overline{W_{\alpha}}\; \}$, hence it is a principal groupoid. We
claim that $\gcheck_N(V)$ is a closed subset of the compact
Hausdorff space $\Tspace\ast\ruhf(\Omega^{(N)})$ $\simeq \Tspace
\times \Omega^{(N)} \times \Omega^{(N)}$, hence compact and
Hausdorff in the relative topology. Let $(\alpha_\lambda,
t_\lambda, \beta_\lambda)$ be a net in $\gcheck_N(V)$ converging
to $(\alpha, t, \beta) \in \Truhf{\Omega^{(N)}}$. Then there
exists $\lambda_0$ such that for $\lambda\geq \lambda_0$,
$\alpha_\lambda=\alpha$ and $\beta_\lambda=\beta$, hence
$t_\lambda \in\overline{W_\alpha} \cap \overline{W_\beta}$. Since
$t_\lambda \to t$, it follows that $t\in \overline{W_\alpha} \cap
\overline{W_\beta}$ and $(\alpha, t, \beta) \in \gcheck_N(V)$.
Finally, the unit space $\gcheck_N^{(0)}(V)$ of $\gcheck_N(V)$ is
open, since $ \gcheck_N^{(0)}(V) = \bigcup_{\alpha \in
\Omega^{(N)}} \check{Z}_{\alpha, \alpha, \Tspace}.$ Hence
$\gcheck_N(V)$ is r-discrete.

It is clear that the mapping $J$ is a continuous groupoid
homomorphism. In order to prove that it is an open mapping, let $W
\subseteq G_N(V)$ be open and fix $(\alpha, t, \beta) \in W$. Now let
$U \subseteq W_\alpha \cap W_\beta$ be an open neighborhood of $t$
in $\Tspace$ such that $Z_{\alpha,\beta,U} \subseteq W$. We have
that if $(\gamma, s, \delta) \in \check{Z}_{\alpha, \beta, U}$, then
$\gamma=\alpha$, $\delta=\beta$ and $s\in U\subseteq W_\alpha \cap
W_\beta$, hence $(\gamma, s, \delta) \in G_N(V)$. In other words,
$\check{Z}_{\alpha,\beta,U} \subseteq G_N(V)$. Since it follows from
the definitions that $Z_{\alpha,\beta,U} = \check{Z}_{\alpha, \beta,
U} \cap G_N(V)$, we have that in this case $Z_{\alpha,\beta,U} =
\check{Z}_{\alpha, \beta, U}$, and therefore $\check{Z}_{\alpha,
\beta, U} \subseteq W$. It follows that $W$ is a union of open sets
of $\gcheck_N(V)$, thus open.
\end{proof}

Our goal is to ``take the limit as $N$ goes to $\infty$'' of the
Raeburn-Taylor groupoids $G_N(V)$. We are aiming for a groupoid
$G(V)$ which is locally compact, Hausdorff, principal and
\'etale. A first step in this direction is to consider the
following groupoid:

\begin{mydef} We shall refer to the topological groupoid
$$
G_\infty(V) = \{ (\alpha, t, \beta)~|~ (\alpha,\beta) \in
\ruhf(\Omega),\; t \in W_\alpha \cap W_\beta \}
$$
endowed with the following structure. A pair $(\alpha, t, \beta)$,
$(\gamma, s, \delta)$ is composable if and only if $\beta=\gamma$
and $t=s$, in which case their product is $(\alpha, t, \delta)$. The
inverse is given by $(\alpha, t, \beta)^{-1}= (\beta, t, \alpha)$.
The topology of $G_\infty(V)$ is the relative topology from its
natural inclusion in $\Tspace \ast \ruhf(\Omega)$.
\end{mydef}

It follows from the inclusion of $G_\infty(V)$ into $\Tspace \ast
\ruhf(\Omega)$ that $G_\infty(V)$ is in fact a groupoid. But in
general, this natural candidate for $G(V)$ fails to be locally
compact (see Example~\ref{ex:Ginfty-not-loc-compact}).
Furthermore, the topological closure of $G_\infty(V)$ in
$\Truhf{\Omega}$ need not be closed under the multiplication
induced from $\Truhf{\Omega}$ (see Example~\ref{rem:sharp}), and
thus it is not a groupoid. We are led to consider $\gcheckinf(V)$
which is the algebraic closure of $\overline{G_\infty(V)}$ inside
$\Truhf{\Omega}$. Although $\gcheckinf(V)$ is a locally compact
groupoid, it is also lacking because it need not be \'etale (see
Example~\ref{ex:gcheck-not-etale}). Ultimately we identify a
groupoid $G(V)$ which is a subgroupoid of $\gcheckinf(V)$
containing $G_\infty(V)$ and which satisfies all the above
desired properties. Having outlined our plan, we now provide the
definitions of $\gcheckinf(V)$ and $G(V)$, establish their
properties, and give some examples.

\begin{mydef} We endow the set
$$
\gcheckinf(V) = \{ (\alpha, t, \beta): (\alpha,\beta) \in \ruhf(\Omega), \; t
\in (\bigcap_{N=0}^\infty \overline{W_{\alpha|N}}) \cap
(\bigcap_{N=0}^\infty \overline{W_{\beta|N}}) \}
$$
with the following groupoid operations: a pair $(\alpha, t, \beta)$,
$(\gamma, s, \delta)$ is composable if and only if $\beta=\gamma$
and $t=s$, in which case their product is $(\alpha, t, \delta)$. The
inverse is given by $(\alpha, t, \beta)^{-1}= (\beta, t, \alpha)$.
The topology of $\gcheckinf(V)$ is the relative topology from its
natural inclusion into $\Tspace \ast \ruhf(\Omega)$. As we will see
shortly, $\gcheckinf(V)$ is a groupoid, and it is the algebraic
closure of $\overline{G_\infty(V)}$.
\end{mydef}

\begin{mydef} We endow the set
$$
G(V) = \{ (\alpha, t, \beta) \in \gcheckinf(V):  \exists n\in\nn, (\alpha|n, t,
\beta|n) \in G_n(V) \text{ and } \alpha_k = \beta_k \text{ for } k>n \}
$$
with the groupoid operations and the relative topology inherited
from its natural inclusion into $\gcheckinf(V)$. We will soon verify
that $G(V)$ is in fact a subgroupoid of $\gcheckinf(V)$.
\end{mydef}

In order to lighten the notation, henceforth we will fix $V$, the
sequence of intersection refinements, and we will refrain from
marking the dependence of the groupoids on the sequence. For
example, we will denote $\gcheckinf(V)$ by $\gcheckinf$.

Note that for any $N \in \mathbb{N}$ and $\alpha \in \Omega^{(N)}$ we have
that $\bigcap_{n=0}^N \overline{W_{\alpha|n}} =
\overline{W_{\alpha}}$. In light of this we view $\gcheckinf$ as
the counterpart of $\gcheck_N$ when replacing $N$ with $\infty$.

It may not be immediately apparent that $G$ is closed under the
operations of $\gcheckinf$. We address this issue in the following
proposition.

\begin{prop}\label{prop:Ginfty_and_badg_principal_and metrizable}
$\gcheckinf$ is a principal groupoid containing $G_\infty$ and $G$
as subgroupoids. In particular, $G$ is a principal groupoid.
\end{prop}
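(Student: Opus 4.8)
The plan is to realize all three groupoids as subsets of the ambient groupoid $\Truhf{\Omega}$, equipped with the inherited operations, and to reduce the whole statement to a closure check. The point is that principality only shrinks under passage to subgroupoids: $\Truhf{\Omega}$ is principal by Proposition~\ref{prop:ruhf}, so if $H \subseteq \Truhf{\Omega}$ is closed under the inherited inverse and multiplication, then any $x \in H$ with $r(x)=d(x)$ already satisfies $r(x)=d(x)$ in $\Truhf{\Omega}$ and must therefore be a unit; hence $H$ is principal as well. Thus it suffices to prove that each of $\gcheckinf$, $G_\infty$ and $G$ is a subgroupoid of $\Truhf{\Omega}$, and principality of all three — in particular of $G$ — follows automatically.

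First I would dispatch $\gcheckinf$. Closure under inversion is immediate, since the defining point condition $t \in (\bigcap_N \overline{W_{\alpha|N}}) \cap (\bigcap_N \overline{W_{\beta|N}})$ is symmetric in $\alpha$ and $\beta$. For multiplication, a composable pair has the form $(\alpha,t,\beta),(\beta,t,\delta)$ with product $(\alpha,t,\delta)$: transitivity of $\sim$ gives $(\alpha,\delta)\in\ruhf(\Omega)$, while the left factor supplies $t\in\bigcap_N\overline{W_{\alpha|N}}$ and the right factor supplies $t\in\bigcap_N\overline{W_{\delta|N}}$, which is exactly the condition for $(\alpha,t,\delta)\in\gcheckinf$. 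The inclusion $G_\infty \subseteq \gcheckinf$ is then a one-line comparison of point conditions: since $W_\alpha = \bigcap_k V^{(k)}_{\alpha_k} \subseteq W_{\alpha|N} \subseteq \overline{W_{\alpha|N}}$ for every $N$, membership $t \in W_\alpha \cap W_\beta$ forces $t$ into the larger intersection defining $\gcheckinf$; as $G_\infty$ is already a groupoid, it is a subgroupoid of $\gcheckinf$.

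The hard part will be showing that $G$ is closed under the operations, exactly the point flagged before the statement. Inversion is again painless: if $(\alpha,t,\beta)\in G$ with witness $n$, the same $n$ witnesses $(\beta,t,\alpha)\in G$, because $G_n$ is a groupoid and the tail condition $\alpha_k=\beta_k$ $(k>n)$ is symmetric. For multiplication, take composable $(\alpha,t,\beta),(\beta,t,\delta)\in G$ with witnesses $n_1,n_2$ and set $n=\max(n_1,n_2)$; the tail equalities chain, so $\alpha_k=\beta_k=\delta_k$ and hence $\alpha_k=\delta_k$ for $k>n$. The crux is verifying $(\alpha|n,t,\delta|n)\in G_n$, i.e. $t\in W_{\alpha|n}\cap W_{\delta|n}$, and this is where careful index bookkeeping is needed. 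I would argue coordinatewise that $t\in V^{(k)}_{\alpha_k}$ for each $k\leq n$: for $k\leq n_1$ this follows from $t\in W_{\alpha|n_1}$, while for $n_1<k\leq n$ one necessarily has $n=n_2>n_1$ and $k\leq n_2$, so $\alpha_k=\beta_k$ combined with $t\in W_{\beta|n_2}$ yields $t\in V^{(k)}_{\beta_k}=V^{(k)}_{\alpha_k}$; intersecting over $k$ gives $t\in W_{\alpha|n}$. The symmetric argument, using the $\delta$-data and the tail equality $\delta_k=\beta_k$ for $k>n_2$, gives $t\in W_{\delta|n}$. Hence $(\alpha,t,\delta)\in G$, so $G$ is a subgroupoid of $\gcheckinf$, and being a subgroupoid of the principal groupoid $\gcheckinf$ it is principal. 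The only genuine obstacle is this coordinate-chasing verification of the point condition for the product; everything else is formal.
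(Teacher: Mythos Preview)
Your proof is correct and follows essentially the same route as the paper: reduce principality to a subgroupoid check inside the ambient principal groupoid, handle $\gcheckinf$ and $G_\infty$ briefly, and then carry out the index bookkeeping for the product in $G$ by passing to the larger of the two witnesses and using the tail agreement $\alpha_k=\beta_k$ to transfer membership $t\in V^{(k)}_{\beta_k}$ to $t\in V^{(k)}_{\alpha_k}$ in the intermediate range. The paper phrases the key step more tersely (``since $t\in W_{\beta|m}$ and $t\in W_{\alpha|n}$, we actually have $t\in W_{\alpha|m}$'') where you unwind it coordinatewise, but the content is identical.
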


\begin{proof}
The operations on $\gcheckinf$ correspond to the equivalence
relation on $\Tspace \times \Omega$ corresponding to
$\Truhf{\Omega}$ but restricted to the set $ \{ (t,\alpha) \in
\Tspace \times \Omega ~|~ t \in \bigcap_{N=0}^\infty
\overline{W_{\alpha|N}}\; \}$. Therefore $\gcheckinf$ is a
principal groupoid. It is straightforward to check that
$G_\infty$ is a subgroupoid of $\gcheckinf$. Now suppose that
$(\alpha, t, \beta)$ and $(\delta, s, \gamma)$ are elements of
$G$ which are composable in $\gcheckinf$. Then we have that
$t=s$, $\beta=\delta$, and there are $n,m\in \nn$ such that $t
\in W_{\alpha|n} \cap W_{\beta|n}$, $t \in W_{\beta|m} \cap
W_{\gamma|m}$, $\alpha_k=\beta_k$ for $k>n$ and $\beta_j=
\gamma_j$ for $j>m$. Let us assume without loss of generality
that $n\leq  m$. We can then write:
\begin{align*}
 \alpha & = (\alpha_0, \alpha_1, \dots, \alpha_n, \alpha_{n+1}, \dots \alpha_m,
\alpha_{m+1}, \alpha_{m+2}, \dots)\\
 \beta  & = (\beta_0, \beta_1, \dots, \beta_n, \alpha_{n+1}, \dots \alpha_m,
\alpha_{m+1}, \alpha_{m+2}, \dots)\\
 \gamma &= (\gamma_0, \gamma_1, \dots, \gamma_n, \gamma_{n+1}, \dots ,
\gamma_m, \alpha_{m+1}, \alpha_{m+2}, \dots)
\end{align*}
Notice that since $t \in W_{\beta|m}$ and $t \in W_{\alpha|n}$, we
actually have that $t \in W_{\alpha|m}$. Since $t \in W_{\gamma|m}$,
we conclude that $(\alpha, t, \gamma) \in G$. It is clear that $G$
is closed under inverses, therefore it is a subgroupoid of
$\gcheckinf$.

Finally, $G_\infty$ and $G$ are principal groupoids because they are
subgroupoids of a principal groupoid.
\end{proof}

The following remark will prove useful.
\begin{remark}\label{r:top1}
Suppose $O$ is an open set, $A$ is a set and $x\in O\cap
\overline{A}$. Then $x\in \overline{O\cap A}$.

The proof is easy. Given a net $x_\lambda$ in $A$ converging to
$x$, there exists $\lambda_0$ such that for $\lambda \geq
\lambda_0$ we have $x_\lambda \in O$. Thus for $\lambda \geq
\lambda_0$ we have $x_\lambda \in O \cap A$ and $x_\lambda \to x$.
\end{remark}

\begin{prop}\label{prop:Ginfty-in-G}
The topological closure of $G_\infty$ in $\Truhf{\Omega}$ is given
by
\begin{equation}\label{eq:G-closure}
\overline{G_\infty} = \{ (\alpha, t, \beta) \in
\Tspace\ast\ruhf(\Omega) : t \in \cap_N \overline{ W_{\alpha|N}\cap
W_{\beta|N}} \}.
\end{equation}
Moreover, The groupoid $\gcheckinf$ is the algebraic closure of
$\overline{G_\infty}$ in $\Truhf{\Omega}$.
\end{prop}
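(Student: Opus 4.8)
The plan is to establish the two assertions separately, using throughout the explicit description of convergence in $\Truhf{\Omega}$ recorded in Remark~\ref{rem:conv-Truhf}; denote by $S$ the set on the right-hand side of \eqref{eq:G-closure}. For the inclusion $\overline{G_\infty}\subseteq S$, given $(\alpha,t,\beta)\in\overline{G_\infty}$ I would pick a net $(\alpha_\lambda,t_\lambda,\beta_\lambda)$ in $G_\infty$ converging to it. Since $(\alpha,\beta)\in\ruhf(\Omega)$ there is $n_0$ with $(\alpha,t,\beta)\in\XX_{n_0}$. Fixing $N$ and setting $N'=\max(N,n_0)$, part (2) of Remark~\ref{rem:conv-Truhf} produces a tail on which $\alpha_\lambda|N'=\alpha|N'$ and $\beta_\lambda|N'=\beta|N'$; as $t_\lambda\in W_{\alpha_\lambda}\cap W_{\beta_\lambda}\subseteq W_{\alpha|N'}\cap W_{\beta|N'}$ and $t_\lambda\to t$, one gets $t\in\overline{W_{\alpha|N'}\cap W_{\beta|N'}}\subseteq\overline{W_{\alpha|N}\cap W_{\beta|N}}$, and since $N$ is arbitrary, $(\alpha,t,\beta)\in S$.

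For the reverse inclusion $S\subseteq\overline{G_\infty}$ I would build an approximating net indexed by pairs $(N,O)$ with $N\ge n_0$ and $O$ an open neighbourhood of $t$, directed by taking $N$ larger and $O$ smaller. For each such pair the membership $t\in\overline{W_{\alpha|N}\cap W_{\beta|N}}$ forces $O\cap W_{\alpha|N}\cap W_{\beta|N}\neq\emptyset$, so choose $t_{(N,O)}$ in it. The key device is to extend the finite words $\alpha|N,\beta|N$ to full sequences $\tilde\alpha,\tilde\beta\in\Omega$ sharing a common tail: for $k>N$ pick a single index $j_k$ with $t_{(N,O)}\in V^{(k)}_{j_k}$, which is possible because each $V^{(k)}$ covers $\Tspace$. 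Then $\tilde\alpha\sim\tilde\beta$ and $t_{(N,O)}\in W_{\tilde\alpha}\cap W_{\tilde\beta}$, so $(\tilde\alpha,t_{(N,O)},\tilde\beta)\in G_\infty$, and one verifies against Remark~\ref{rem:conv-Truhf} that this net converges to $(\alpha,t,\beta)$.

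For the second assertion, the inclusion $\overline{G_\infty}\subseteq\gcheckinf$ is immediate from \eqref{eq:G-closure}, since $\overline{W_{\alpha|N}\cap W_{\beta|N}}$ lies in both $\overline{W_{\alpha|N}}$ and $\overline{W_{\beta|N}}$; as $\gcheckinf$ is a groupoid by Proposition~\ref{prop:Ginfty_and_badg_principal_and metrizable}, the algebraic closure of $\overline{G_\infty}$ is contained in $\gcheckinf$. The substance is the reverse inclusion: given $(\alpha,t,\beta)\in\gcheckinf$ with $\alpha_k=\beta_k$ for $k>M$, I would produce a bridge $\mu\in\Omega$ and factor $(\alpha,t,\beta)=(\alpha,t,\mu)(\mu,t,\beta)$ with both factors in $\overline{G_\infty}$. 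Setting $\mu_k=\alpha_k=\beta_k$ for $k>M$ and, for $k\le M$, choosing $\mu_k$ with $t\in V^{(k)}_{\mu_k}$, one has $\mu\sim\alpha\sim\beta$ while $O:=\bigcap_{k=0}^M V^{(k)}_{\mu_k}$ is an open neighbourhood of $t$; a direct computation then gives $W_{\alpha|N}\cap W_{\mu|N}=O\cap W_{\alpha|N}$ for $N>M$, and symmetrically for $\beta$.

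The main obstacle, and the very reason $\gcheckinf$ is in general strictly larger than $\overline{G_\infty}$, is that $\overline{W_{\alpha|N}}\cap\overline{W_{\beta|N}}$ need not equal $\overline{W_{\alpha|N}\cap W_{\beta|N}}$, so knowing $(\alpha,t,\beta)\in\gcheckinf$ does not place it directly in $\overline{G_\infty}$. The bridge resolves this: since $t\in O\cap\overline{W_{\alpha|N}}$ with $O$ open, Remark~\ref{r:top1} gives $t\in\overline{O\cap W_{\alpha|N}}=\overline{W_{\alpha|N}\cap W_{\mu|N}}$, and because these sets decrease in $N$ it suffices to treat $N>M$; hence $(\alpha,t,\mu)\in\overline{G_\infty}$ and likewise $(\mu,t,\beta)\in\overline{G_\infty}$. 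Composing the two factors recovers $(\alpha,t,\beta)$, which shows $\gcheckinf$ is contained in the algebraic closure and completes the identification.
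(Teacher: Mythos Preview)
Your proof is correct and follows essentially the same approach as the paper's own argument: the same net construction indexed by pairs $(N,O)$ for the characterization of $\overline{G_\infty}$, and the same ``bridge'' element (your $\mu$, the paper's $\gamma$) to factor an arbitrary element of $\gcheckinf$ as a product of two elements of $\overline{G_\infty}$, with Remark~\ref{r:top1} doing the key work in both places. The only cosmetic difference is that the paper handles the cases $N\le M$ and $N>M$ separately whereas you note monotonicity in $N$ to reduce to $N>M$; the content is identical.
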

\begin{proof} Suppose $(\alpha, t, \beta)\in \Truhf{\Omega}$ is the limit
    of a net $(\alpha_\lambda, t_\lambda, \beta_\lambda)_{\lambda \in
    \Lambda}$ in $G_\infty$. Fix $N\in \nn$. There exists
    $\lambda_0\in\Lambda$ such that for $\lambda \geq\lambda_0$,
    $\alpha_\lambda|N = \alpha|N$, $\beta_\lambda|N = \beta|N$. In
    particular, for
    $\lambda \geq \lambda_0$, $t_\lambda \in W_{\alpha_\lambda|N}\cap
    W_{\beta_\lambda|N} = W_{\alpha|N}\cap W_{\beta|N}$. Since
    $t_\lambda \to t$ we have that $t \in \overline{W_{\alpha|N}\cap
    W_{\beta|N}}$.

    Conversely, suppose $(\alpha, t,\beta) \in \Truhf{\Omega}$ and
    $t\in \cap_N \overline{ W_{\alpha|N}\cap W_{\beta|N}}$. Let $S$
    be a local basis of open neighborhoods of $t$, partially ordered
    by reverse inclusion, and let $\Lambda=\nn \times S$ be the
    directed set with the product partial ordering. Let $\lambda = (N,
    A)$ be fixed. Notice that $t \in \overline{W_{\alpha|N}\cap W_{\beta|N}}$,
    hence we can choose $t_\lambda \in A \cap W_{\alpha|N}\cap
    W_{\beta|N}$. Define $\alpha_\lambda|N=\alpha|N$,
    $\beta_\lambda|N=\beta|N$, and for $n>N$ set
    $(\alpha_\lambda)_n=(\beta_\lambda)_n$ to be any
    value so that $t_\lambda \in V^{(n)}_{(\alpha_\lambda)_n}$. This
    is possible because $V$ is an ordered cover refinement. Now it
    is clear that $(\alpha_\lambda, t_\lambda, \beta_\lambda)$ is a net in
    $G_\infty$ which converges to $(\alpha, t, \beta)$.

    Finally, we show that $\gcheckinf$ is the algebraic closure of
    $\overline{G_\infty}$ in $\Truhf{\Omega}$. It follows from the definition
    of $\gcheckinf$ in conjunction with \eqref{eq:G-closure} that
    $\overline{G_\infty} \subseteq \gcheckinf$. Therefore, in order
    to prove the statement, it suffices to show
    that for every $z \in \gcheckinf$ there exist $x, y \in
    \overline{G_\infty}$ such that $z=xy$.

    Fix $z=(\alpha, t, \beta) \in \gchecking$. There exists
    $M\in\nn$ such that $\alpha_n=\beta_n$ for $n>M$. Now pick any
    $\gamma \in \Omega$ such that  $t \in W_{\gamma|M}$ (again
    this is possible since $V$ is an ordered cover refinement) and
    $\gamma_n=\alpha_n$ for $n>M$. Then set $x=(\alpha, t, \gamma)$
    and $y=(\gamma, t, \beta)$.

    We claim that $x,y\in \overline{G_\infty}$. We prove it for $x$, since
    an analogous argument will yield the result for $y$. We must
    prove that $t \in \overline{W_{\alpha|N}\cap W_{\gamma|N}}$ for
    all $N\in \nn$. Fix $N\in \nn$, and recall that $t\in
    \overline{W_{\alpha|N}}$. Suppose first that $N\leq M$. Then
    $W_{\gamma|N}$ is open and contains $t$, hence by
    Remark~\ref{r:top1} we have $t \in \overline{W_{\alpha|N}\cap
    W_{\gamma|N}}$. When $N>M$  notice that
    $\overline{W_{\alpha|N}\cap
    W_{\gamma|N}}= \overline{W_{\alpha|N}\cap
    W_{\gamma|M}}$ and the latter contains $t$ by Remark~\ref{r:top1}.
    Finally, it is clear that $z=xy$.
\end{proof}

\begin{example}[\textit{In general $G_\infty \neq \overline {G_\infty}$ and
$\overline {G_\infty} \neq \gchecking$}]\label{rem:sharp}
Consider the ordered cover refinement $V$ given as follows: for
every $k$ let $V^{(k)}_0 = [-1,0)$, $V^{(k)}_1 = (0,1]$ and
$V^{(k)}_2 = [-1,1]$, and consider the following elements (we
denote the infinite repetition of a number by placing a bar over
it):
\begin{align*}
g = & (0000\overline{0}, 0, 0000\overline{0}) &
x = & (0222\overline{2}, 0, 2222\overline{2}) \\
y = & (2222\overline{2}, 0, 1222\overline{2}) & z =
&(0222\overline{2}, 0, 1222\overline{2})
\end{align*}

We have immediately that $G_\infty \neq \overline{G_\infty}$, since
we have a net $(000\overline{0}, -1/n, 0000\overline{0})$ in
$G_\infty$ converging to $g \not\in G_\infty$.

Observe that $z\in \gchecking$ but $z\not\in \overline{G_\infty}$ by
\eqref{eq:G-closure} since $\overline{W_{0} \cap W_{1}} =\emptyset$.
Incidentally, this shows already that $\overline{G_\infty}$ is not
closed under multiplication, for in that case it would be equal to
its algebraic closure $\gchecking$ since it is obviously closed
under inverses. More concretely, notice that $x, y \in
\overline{G_\infty}$ and $z=xy$, but $z\not\in \overline{G_\infty}$.

We remark that in general, one can verify that $\gcheckinf =
\overline{G_\infty} \cdot \overline{G_\infty}.$
\end{example}

We have seen that if $G$ is a topological groupoid and $S$ is a
subset closed under the groupoid operations of $G$, in general
$\overline{S}$ need not be closed under the operations of $G$. This
is different from the category of groups, where the closure of a
subgroup is automatically a subgroup. We point out that from the
axioms in the definition of a groupoid it follows that
$(\overline{S} \times \overline{S})\cap G^{(2)}$ determines the set
of composable pairs for $\overline{S}$. In particular,
$\overline{G_\infty}$ cannot be made into a groupoid with the
restriction of the operations of $\Truhf{\Omega}$ by attempting to
declare a smaller set of composable pairs.

\section{Properties of $\gchecking$ and $G$}

\begin{prop}\label{prop:gcheckinf-locompact}
$\gcheckinf$ is a closed subset of $\Truhf{\Omega}$, therefore it is
a metrizable locally compact groupoid.
\end{prop}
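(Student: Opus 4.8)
The plan is to prove directly that $\gcheckinf$ is closed in $\Truhf{\Omega}$ using the net characterization of convergence recorded in Remark~\ref{rem:conv-Truhf}, and then to read off metrizability and local compactness from standard topology together with the fact, already established in Proposition~\ref{prop:Ginfty_and_badg_principal_and metrizable}, that $\gcheckinf$ is a principal groupoid. Recall that $\Truhf{\Omega}$ is itself locally compact, Hausdorff and metrizable by Proposition~\ref{prop:ruhf}, so a closed subset inherits all of these.

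For closedness I would take an arbitrary net $\xi_\lambda=(\alpha_\lambda,t_\lambda,\beta_\lambda)$ in $\gcheckinf$ converging to a point $\xi=(\alpha,t,\beta)\in\Truhf{\Omega}$ and show $\xi\in\gcheckinf$; that is, that $t\in\bigcap_N\overline{W_{\alpha|N}}$ and $t\in\bigcap_N\overline{W_{\beta|N}}$. Since $\xi\in\Truhf{\Omega}$, there is some $n_0$ with $\alpha_k=\beta_k$ for $k>n_0$, so $\xi\in\XX_N$ for every $N\geq n_0$. The central point is that Remark~\ref{rem:conv-Truhf}(2) may be invoked for \emph{each} such $N$, not only for $n_0$: for every fixed $N\geq n_0$ there is a threshold $\lambda_N$ so that $\alpha_\lambda|N=\alpha|N$ and $\beta_\lambda|N=\beta|N$ for $\lambda\geq\lambda_N$. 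Because $W_{\alpha|N}$ depends only on the truncation $\alpha|N$, this gives $W_{(\alpha_\lambda)|N}=W_{\alpha|N}$, and membership of $\xi_\lambda$ in $\gcheckinf$ yields $t_\lambda\in\overline{W_{(\alpha_\lambda)|N}}=\overline{W_{\alpha|N}}$. As $\overline{W_{\alpha|N}}$ is closed and $t_\lambda\to t$, we conclude $t\in\overline{W_{\alpha|N}}$ for every $N\geq n_0$. For $N<n_0$ the nesting $W_{\alpha|N}\supseteq W_{\alpha|n_0}$ forces $\overline{W_{\alpha|N}}\supseteq\overline{W_{\alpha|n_0}}\ni t$, so the condition holds for all $N$; the identical argument handles $\beta$. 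Hence $\xi\in\gcheckinf$, and $\gcheckinf$ is closed. The groupoid structure is already known from Proposition~\ref{prop:Ginfty_and_badg_principal_and metrizable}, so a closed subset of a locally compact Hausdorff metrizable space being locally compact and metrizable completes the argument.

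The step I expect to be the only real subtlety is precisely the one just isolated: a first attempt might control only the first $n_0$ coordinates of the $\alpha_\lambda$, leaving the tail coordinates $n_0<k\leq N$ uncontrolled and so failing to pin down $t\in\overline{W_{\alpha|N}}$ for $N>n_0$. The resolution is that the convergence criterion of Remark~\ref{rem:conv-Truhf} in fact controls arbitrarily many coordinates, each at the cost of a later threshold $\lambda_N$, which is exactly what makes $W_{(\alpha_\lambda)|N}=W_{\alpha|N}$ available for every $N$. Everything else is a routine passage to a closed neighborhood and an appeal to monotonicity of the sets $W_{\alpha|N}$ in $N$.
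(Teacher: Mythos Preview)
Your proof is correct and follows essentially the same approach as the paper: for each fixed $N$, use the convergence criterion to eventually match the first $N$ coordinates, deduce $t_\lambda\in\overline{W_{\alpha|N}}$ from membership in $\gcheckinf$, and pass to the limit using closedness. The paper works with sequences (legitimate since $\Truhf{\Omega}$ is metrizable) and packages both $\alpha$ and $\beta$ into a single decreasing family $F_k=(\bigcap_{n=1}^k\overline{W_{\alpha|n}})\cap(\bigcap_{n=1}^k\overline{W_{\beta|n}})$, which spares the separate treatment of small $N$ via monotonicity, but the substance is identical.
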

\begin{proof}
The topological space $\gcheckinf$ is metrizable because it is a
subspace of $\Truhf{\Omega}$ which is metrizable, by
Proposition~\ref{prop:ruhf}.

Let $x_i=(\alpha_i, t_i, \beta_i)$ be a sequence in $\gcheckinf$
converging to $x=(\alpha, t, \beta)$ in $\Truhf{\Omega}$, that is to
say, $t_i \to t$ in $\Tspace$ and $(\alpha_i, \beta_i) \to (\alpha,
\beta)$ in $\ruhf(\Omega)$. In order to show that $x \in
\gchecking$, we need only to show that $t \in (  \cap_{n\in\nn}
\overline{W_{\alpha|n}}) \cap ( \cap_{n\in\nn}
\overline{W_{\beta|n}})$. For each $k >0$, denote $F_k = (
\cap_{n=1}^k \overline{W_{\alpha|n}}) \cap ( \cap_{n=1}^k
\overline{W_{\beta|n}})$; this is a sequence of decreasing sets and
 of course we want to show that $t \in \cap_{k\in\nn} F_k$.
 Since $(\alpha_i, \beta_i) \to (\alpha, \beta)$ in $\ruhf(\Omega)$,
for every $k > 0$ there exists $i_k
>0$ such that for $i\geq i_k$, $(\alpha_i)_n = \alpha_n$ and
$(\beta_i)_n = \beta_n$ for $n\leq k$. Since $x_{i_k} \in
\gcheckinf$, we have that $t_{i_k} \in ( \cap_{n\in\nn}
\overline{W_{\alpha_{i_k}|n}}) \cap ( \cap_{n\in\nn}
\overline{W_{\beta_{i_k}|n}})$, therefore $t_{i_k} \in (
\cap_{n=1}^k \overline{W_{\alpha|n}}) \cap ( \cap_{n=1}^k
\overline{W_{\beta|n}}) = F_k$. Since $\{ F_k : k > 0 \}$ is a
decreasing sequence of closed sets and $t_{i_k} \to t$, we have that
$t \in F_k$ for every $k$. Therefore $x\in \gchecking$ and we have
shown that $\gchecking$ is a closed subset of $\Truhf{\Omega}$. In
particular, as a closed subset of a locally compact set,
$\gchecking$ is locally compact in its relative topology.
\end{proof}

The following subsets of $\gchecking$ will play an important role in
the sequel.

\begin{mydef} For each $n\geq 0$, define the subset of $\gcheckinf$ given by
$$
\YY_n= \gchecking \cap \XX_n =  \{ (\alpha, t, \beta) \in \gcheckinf
: \alpha_k=\beta_k \text{ for } k
> n \}.
$$
\end{mydef}

Notice that the sets $\YY_n$ are nested: $\YY_n \subseteq
\YY_{n+1}$, and that $\gcheckinf =\bigcup_{n\in\nn} \YY_n$.
Furthermore, the convergence in $\gchecking$ can be stated using the
sets $\YY_n$ instead of $\XX_n$ in Remark~\ref{rem:conv-Truhf}.

\begin{remark}\label{remark:YY-properties}
For every $n\geq 0$ the set $\YY_n$ is clearly a subgroupoid of
$\gcheckinf$. Furthermore it is a compact open subset of
$\gcheckinf$.  This follows immediately from
Lemma~\ref{lemma:XX-properties} and
Proposition~\ref{prop:gcheckinf-locompact}, since $\YY_n=
\gchecking \cap \XX_n$. We also point out that
$\YY_n^{(0)}=\gcheckinf^{(0)}$ since $\YY_n$ contains
$\gcheckinf^{(0)}$.
\end{remark}

\begin{mydef}\label{def:basis-G} For any open subset $U$ of $\Tspace$, $n
\in \nn$ and $(\alpha,\beta) \in \ruhf(\Omega^{(n)})$, define
$$
\zcheck_{\alpha,\beta, U, n} = \{ (\gamma, t, \delta) \in\gcheckinf : t\in U,
\gamma|n=\alpha, \delta|n=\beta, \text{ and } \forall k>n,
\gamma_k=\delta_k \}
$$
\end{mydef}

It is easy to check that the collection of all such sets forms a
basis for the topology of $\gcheckinf$. It is useful to notice
that this is in fact a basis of open G-sets, i.e. sets where $r$
and $d$ are injective. Therefore we also have that $G_\infty$ and
$G$ each have a basis of open G-sets in the relative topology.

\begin{theorem}\label{thm:properties gcheckinf}
The groupoid $\gcheckinf$ is locally compact, second countable,
Hausdorff, metrizable, principal and r-discrete.
\end{theorem}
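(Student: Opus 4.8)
The plan is to recognize that almost every property in the statement has already been proved or is inherited from the ambient groupoid $\Truhf{\Omega}$, so that the one genuinely new point to settle is r-discreteness.

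First I would dispose of the inherited properties. By definition $\gcheckinf$ carries the relative topology of $\Truhf{\Omega}$, and Proposition~\ref{prop:ruhf} guarantees that $\Truhf{\Omega}$ is second countable, Hausdorff and metrizable; since each of these properties passes to subspaces, $\gcheckinf$ enjoys them as well. Local compactness and metrizability are moreover recorded explicitly in Proposition~\ref{prop:gcheckinf-locompact}, where $\gcheckinf$ is realized as a closed subset of the locally compact metrizable space $\Truhf{\Omega}$. That $\gcheckinf$ is principal is exactly the content of Proposition~\ref{prop:Ginfty_and_badg_principal_and metrizable}. Hence it remains only to show that $\gcheckinf$ is r-discrete, i.e.\ that the unit space $\gcheckinf^{(0)}$ is open.

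For r-discreteness I would exploit the basis of open sets $\zcheck_{\alpha,\beta,U,n}$ from Definition~\ref{def:basis-G}. The decisive elementary observation is that if the two multi-indices are chosen equal, then the defining conditions collapse onto the diagonal: for any $(\gamma,t,\delta) \in \zcheck_{\alpha,\alpha,U,n}$ one has $\gamma|n = \delta|n = \alpha$ and $\gamma_k = \delta_k$ for all $k > n$, forcing $\gamma = \delta$, so that $(\gamma,t,\delta)$ is a unit. Thus every such $\zcheck_{\alpha,\alpha,U,n}$ is an open subset of $\gcheckinf^{(0)}$. Conversely, taking $n=0$ and $U=\Tspace$, each unit lies in the member of this family indexed by its initial coordinate, so that
$$
\gcheckinf^{(0)} = \bigcup_{i \in \Omega_0} \zcheck_{i,i,\Tspace,0}
$$
exhibits the unit space as a union of basic open sets. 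Therefore $\gcheckinf^{(0)}$ is open and $\gcheckinf$ is r-discrete; this is precisely the argument already used to show that $\gcheck_N$ is r-discrete.

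Since the statement is essentially a consolidation of earlier results, I do not expect a serious obstacle. The only step that carries any content is the r-discreteness claim, and there the crux is simply to notice that a basic neighborhood built from a single repeated multi-index is forced to sit inside the unit space; once this is observed, openness of $\gcheckinf^{(0)}$ follows at once.
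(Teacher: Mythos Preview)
Your proposal is correct and follows essentially the same approach as the paper: all properties except r-discreteness are collected from the cited earlier propositions, and r-discreteness is shown by exhibiting $\gcheckinf^{(0)}$ as open via the observation that matching at level $0$ together with the tail condition forces $\gamma=\delta$. The only cosmetic difference is that the paper phrases this last step as a net argument (a net converging to a unit is eventually in the unit space), whereas you write $\gcheckinf^{(0)}=\bigcup_{i\in\Omega_0}\zcheck_{i,i,\Tspace,0}$ directly in terms of the basis of Definition~\ref{def:basis-G}, paralleling the $\gcheck_N$ proof.
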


\begin{proof}
We have already shown that $\gcheckinf$ is principal in
Proposition~\ref{prop:Ginfty_and_badg_principal_and metrizable},
and that it is metrizable and locally compact in
Proposition~\ref{prop:gcheckinf-locompact}. It also inherits from
$\Truhf{\Omega}$ the properties of being second countable and
Hausdorff.

Notice that $\gcheckinf^{(0)}$ is the set of all $(\alpha, t, \beta)\in
\gcheckinf$ such that $\alpha=\beta$. In order to prove the $\gcheckinf$
is r-discrete, we need to show that $\gcheckinf^{(0)}$ is open in
$\gcheckinf$. Let $(\alpha_\lambda, t_\lambda, \beta_\lambda)\in
\gcheckinf$ be a net converging to $(\alpha,t,\alpha)\in
\gcheckinf^{(0)}$. Then there exists $\lambda_0$ such that for $\lambda
\geq \lambda_0$, $(\alpha_{\lambda})_0=\alpha_0$,
$(\beta_{\lambda})_0=\alpha_0$ and
$\alpha_\lambda(n)=\beta_\lambda(n)$ for $n>0$. In other words,
$\alpha_\lambda=\beta_\lambda$ and hence $(\alpha_\lambda,
t_\lambda, \beta_\lambda)\in \gcheckinf^{(0)}$ for $\lambda\geq
\lambda_0$.
\end{proof}

\begin{example}[In general $\gcheckinf$ is not \etale.]\label{ex:gcheck-not-etale}

Let $\Tspace$ be the interval $[0,1]$. We define the ordered cover
refinement $V$ as follows:  let
$V^{(0)}_1=\Tspace$ ,  $V^{(0)}_2=(\frac{1}{2},1]$,
and for every $k >0$ let $V^{(k)}$ be the single set
$V^{(k)}_1=\Tspace$. In order to show that $\gcheckinf$ is not \'etale, it
suffices to
prove that the the range map $r : \gchecking \to \gchecking^{(0)}$ is not
open. Take the point $x=(111\overline{1},\frac{1}{2},211\overline{1}) \in
\gchecking$, and let $\mathcal{O}=\zcheck_{1,2,
(\frac{1}{4},\frac{3}{4}), 0} $ be an open
neighborhood of $x$. Any point $z \in \mathcal{O}$ must be of the form
$(111\overline{1},t,211\overline{1})$, where $t \in
[\frac{1}{2},\frac{3}{4})$. Therefore $r(\mathcal{O}) = \{
(111\overline{1},t,111\overline{1}) ~|~ t \in
[\frac{1}{2},\frac{3}{4}) \}, $ which is not an open set.
\end{example}

\begin{example}[In general $G_\infty$ is not locally
compact]\label{ex:Ginfty-not-loc-compact}

Let $\Tspace=[0,1]$, and for every $k \geq 0$ let $V^{(k)}_0=\Tspace$,
$V^{(k)}_1=(1/2,1]$. Recall that $\Truhf{\Omega}$ is metrizable by
Proposition~\ref{prop:ruhf}, hence
so are $G_\infty$ and $\gcheckinf$. We show that the point
$x=(000\overline{0},\frac{1}{2},000\overline{0})$,
does not have a compact neighborhood. In fact, if $\mathcal{O}$ is a compact
neighborhood of $x$, then it contains an open neighborhood of $x$ of the form
$\zcheck_{00\dots0,00\dots0, U, n} \cap G_\infty = \{
(\alpha, t, \alpha)\in
G_\infty : t\in U, \quad \alpha|n\equiv 0 \}$
for some $n \in \nn$ and $U$  an open subset of $\Tspace$.
Consider the sequence $x_k=(00\dots0111\overline{1},\; \frac{1}{2} + \frac{1}{k},\;
00\dots0111\overline{1})$ where there are exactly $n$ zeros, followed by
infinitely many ones.
For $k$ large enough, $x_k \in \zcheck_{00\dots0,00\dots0, U, n}\cap G_\infty$,
and $x_k$ converges in $\gcheckinf$ to $(00\dots0111\overline{1}, 1/2,
00\dots0111\overline{1})$, however this point is not in
$G_\infty$. It follows that this is a sequence in $\mathcal{O}$ without a
convergent subsequence in $G_\infty$.
\end{example}

\begin{lemma}\label{lem:pi}
The map $\pi_N: \gcheckinf \to \gcheck_N$ given by $\pi_N(\alpha,
t,\beta) = (\alpha|N, t, \beta|N)$ is a surjective continuous
mapping and a groupoid homomorphism.
\end{lemma}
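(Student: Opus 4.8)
The plan is to verify the four required properties in turn: that $\pi_N$ actually lands in $\gcheck_N$, that it respects the groupoid operations, that it is continuous, and that it is surjective. The first three are essentially formal, and the last is where the real content lies.

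First I would check that $\pi_N$ is well defined. Given $(\alpha,t,\beta)\in\gcheckinf$, membership forces $t\in\bigcap_{n=0}^\infty\overline{W_{\alpha|n}}$, so in particular $t\in\overline{W_{\alpha|N}}$ (one of the terms of the intersection), and likewise $t\in\overline{W_{\beta|N}}$; this is exactly the membership condition for $\gcheck_N$, recalling the identity $\bigcap_{n=0}^N\overline{W_{\alpha|n}}=\overline{W_{\alpha|N}}$ noted before Proposition~\ref{prop:Ginfty_and_badg_principal_and metrizable}. Since $\Omega^{(N)}$ is finite, $(\alpha|N,\beta|N)$ automatically lies in $\ruhf(\Omega^{(N)})$, so $(\alpha|N,t,\beta|N)\in\gcheck_N$. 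The homomorphism property is then immediate: truncation commutes with the operations, since composability in both groupoids depends only on agreement of the middle coordinate $t$ and of the shared boundary sequence, and $\alpha|N,\beta|N,\delta|N$ match precisely when $\alpha,\beta,\delta$ do.

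For continuity I would work with the bases. The preimage under $\pi_N$ of a basic open set $\check{Z}_{\alpha,\beta,U}$ of $\gcheck_N$ is the set of all $(\gamma,t,\delta)\in\gcheckinf$ with $\gamma|N=\alpha$, $\delta|N=\beta$, and $t\in U$. Fixing such a point, I would choose $M\geq N$ with $\gamma_k=\delta_k$ for $k>M$ (possible since $(\gamma,\delta)\in\ruhf(\Omega)$), and observe that the basic open set $\zcheck_{\gamma|M,\delta|M,U,M}$ of Definition~\ref{def:basis-G} contains the given point and is contained in the preimage, because any point in it agrees with $\gamma$ and $\delta$ up to coordinate $M\geq N$, hence has the same $N$-truncations. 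Thus the preimage is a union of basic open sets, so $\pi_N$ is continuous.

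The main obstacle is surjectivity, which requires extending truncated tuples back to infinite sequences while remaining inside $\gcheckinf$. Given $(\alpha,t,\beta)\in\gcheck_N$, so that $t\in\overline{W_\alpha}\cap\overline{W_\beta}$, I would build a common tail $c_{N+1},c_{N+2},\dots$ by induction. At stage $n>N$, having arranged $t\in\overline{W_\alpha\cap O_{n-1}}$ and $t\in\overline{W_\beta\cap O_{n-1}}$ for $O_{n-1}=\bigcap_{k=N+1}^{n-1}V^{(k)}_{c_k}$ (with $O_N=\Tspace$), I would pick $c_n$ with $t\in V^{(n)}_{c_n}$ (possible since $V^{(n)}$ is a cover) and apply Remark~\ref{r:top1} with the open set $V^{(n)}_{c_n}$ to each of $W_\alpha\cap O_{n-1}$ and $W_\beta\cap O_{n-1}$, obtaining $t\in\overline{W_\alpha\cap O_n}$ and $t\in\overline{W_\beta\cap O_n}$. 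Setting $\tilde\alpha=(\alpha,c_{N+1},c_{N+2},\dots)$ and $\tilde\beta=(\beta,c_{N+1},c_{N+2},\dots)$ then yields a point of $\gcheckinf$: the closure conditions hold at every level by construction, and the tails agree so that $(\tilde\alpha,\tilde\beta)\in\ruhf(\Omega)$, while $\pi_N(\tilde\alpha,t,\tilde\beta)=(\alpha,t,\beta)$. The crucial observation is that using a single cover index $c_n$ at each stage forces the two extensions to share a tail, which is exactly what lets Remark~\ref{r:top1} be applied to both chains simultaneously; this is the delicate point that a naive coordinatewise extension of $\alpha$ and $\beta$ separately would fail to respect.
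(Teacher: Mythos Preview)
Your proof is correct and follows essentially the same approach as the paper. The paper dismisses well-definedness, continuity, and the homomorphism property as ``easy to check'' and spends its effort on surjectivity, where it does exactly what you do: choose cover indices $\gamma_k$ with $t\in V^{(k)}_{\gamma_k}$ for $k>N$, use these as a common tail for both extensions, and inductively apply Remark~\ref{r:top1} to pass from $t\in\overline{W_{\alpha|n}}$ to $t\in\overline{W_{\alpha|n+1}}$. One small remark on your closing comment: the reason a common tail is essential is not so that Remark~\ref{r:top1} can be applied to both chains---it applies to each separately regardless---but rather to guarantee $(\tilde\alpha,\tilde\beta)\in\ruhf(\Omega)$, i.e.\ that the two infinite sequences differ in only finitely many places.
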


\begin{proof}
It is easy to check that $\pi_N$ is a well-defined continuous
groupoid homomorphism. In order to show that it is surjective, let
$(x, t, y) \in \gcheck_N$. Since for every $n$, $V^{(n)}$ is a
cover of $\Tspace$, there exists a sequence $\gamma = \{\gamma_k
\} \in \Omega$ such that for any $k>N$, $V^{(k)}_{\gamma_k}$
contains $t$. Define $\alpha, \beta \in \Omega$ by setting
$\alpha|N=x$, $\beta|N=y$ and $\alpha_n=\beta_n=\gamma_n$ for
$n>N$. Clearly $(\alpha, \beta) \in \ruhf(\Omega)$. Moreover,
denote $O = V^{(N+1)}_{\gamma_{N+1}}$ and $A = W_{x}$. We then
have that $\overline{A} = \overline{W_x} = \bigcap_{n=0}^N
\overline{W_{\alpha|n}}$. Since $t \in O \cap \overline{A}$, we
have by Remark~\ref{r:top1}, that $t \in \overline{O \cap A} =
\overline{V^{(N+1)}_{\gamma_{N+1}} \cap W_{\alpha|N}} =
\overline{W_{\alpha|N+1}}$. Repeating this reasoning proves that
for every $M$, $ t \in \overline{W_{\alpha|M}} \cap
\overline{W_{\beta|M}}$. Thus $(\alpha, t, \beta) \in \gcheckinf$
and satisfies $\pi_N (\alpha, t, \beta) = (x, t, y)$.
\end{proof}

We omit the similar proof of the following lemma.
\begin{lemma}\label{lem:pi-n-m}
For every $n\leq m$, define a map $\pi_n^m:\gcheck_m \to \gcheck_n$
by $ \pi_n^m(\alpha,t,\beta) = (\alpha|n,t,\beta|n). $
The maps $\pi_n^m$ are surjective continuous groupoid homomorphisms.
\end{lemma}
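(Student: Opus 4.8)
The plan is to establish the four claims --- well-definedness, the homomorphism property, continuity, and surjectivity --- in turn, following the template of the proof of Lemma~\ref{lem:pi}, of which $\pi_n^m$ is simply the finite-stage analogue (with the level $m$ in place of $\infty$).

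First I would verify that $\pi_n^m$ is well-defined, that is, that $(\alpha|n, t, \beta|n) \in \gcheck_n$ whenever $(\alpha, t, \beta) \in \gcheck_m$. Since $n \leq m$, truncation only drops some of the intersecting sets, so $W_\alpha = \bigcap_{k=0}^m V^{(k)}_{\alpha_k} \subseteq \bigcap_{k=0}^n V^{(k)}_{\alpha_k} = W_{\alpha|n}$, and hence $\overline{W_\alpha} \subseteq \overline{W_{\alpha|n}}$; the same holds for $\beta$, so $t \in \overline{W_{\alpha|n}} \cap \overline{W_{\beta|n}}$. Because $\Omega^{(n)}$ is finite, the pair $(\alpha|n, \beta|n)$ lies in $\ruhf(\Omega^{(n)})$ automatically, so $(\alpha|n, t, \beta|n) \in \gcheck_n$.

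Next I would dispatch the two easy structural points. The homomorphism property is a formal check: composability in $\gcheck_m$ amounts to agreement of the middle coordinate and of $t$, both preserved by truncation, and the product and inverse formulas merely reindex the tuples, which $\pi_n^m$ respects. For continuity it suffices to pull back the basic open sets: the preimage of $\check{Z}_{x, y, U}$ is $\{ (\alpha, t, \beta) \in \gcheck_m : \alpha|n = x, \beta|n = y, t \in U \}$, which is the finite union over all extensions $\alpha, \beta \in \Omega^{(m)}$ of $x$ and $y$ of the basic sets $\check{Z}_{\alpha, \beta, U}$, hence open.

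The only step that is more than a formal verification is surjectivity, so that is where I expect the (modest) work to lie. Given $(x, t, y) \in \gcheck_n$, I would use that each $V^{(k)}$ covers $\Tspace$ to pick, for $n < k \leq m$, an index $\gamma_k$ with $t \in V^{(k)}_{\gamma_k}$, and then set $\alpha = (x, \gamma_{n+1}, \dots, \gamma_m)$ and $\beta = (y, \gamma_{n+1}, \dots, \gamma_m)$. To see that $(\alpha, t, \beta) \in \gcheck_m$ I would show $t \in \overline{W_\alpha}$ (and symmetrically for $\beta$) by a finite induction on $k$ using Remark~\ref{r:top1}: starting from $t \in \overline{W_x} = \overline{W_{\alpha|n}}$, and noting at each stage that $t$ lies in the open set $V^{(k)}_{\gamma_k}$, the remark upgrades $t \in \overline{W_{\alpha|(k-1)}}$ to $t \in \overline{W_{\alpha|(k-1)} \cap V^{(k)}_{\gamma_k}} = \overline{W_{\alpha|k}}$. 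After $m-n$ steps this gives $t \in \overline{W_\alpha}$, so $(\alpha, t, \beta) \in \gcheck_m$ with $\pi_n^m(\alpha, t, \beta) = (x, t, y)$. The only delicate point, exactly as in Lemma~\ref{lem:pi}, is the repeated application of Remark~\ref{r:top1}, and even that is routine.
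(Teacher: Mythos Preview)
Your proposal is correct and follows exactly the approach the paper intends: the paper omits the proof of this lemma, noting only that it is similar to Lemma~\ref{lem:pi}, and you have carried out precisely that analogy---well-definedness via $W_\alpha \subseteq W_{\alpha|n}$, the routine homomorphism and continuity checks, and surjectivity by extending with cover indices $\gamma_k$ and applying Remark~\ref{r:top1} inductively. There is nothing to add.
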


\begin{lemma}\label{lem:G_open}
$G$ is an open subgroupoid of $\gcheckinf$.
\end{lemma}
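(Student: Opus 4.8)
The plan is to prove $G$ is open by showing it is a union of basic open sets of the form $\zcheck_{\alpha,\beta,U,n}$, and separately that it is a subgroupoid (which was already done in Proposition~\ref{prop:Ginfty_and_badg_principal_and metrizable}), so here the main content is openness. Recall the defining condition for $(\alpha,t,\beta)\in G$: there exists $n$ with $(\alpha|n,t,\beta|n)\in G_n$ and $\alpha_k=\beta_k$ for $k>n$. The second clause, $\alpha_k=\beta_k$ for $k>n$, is exactly the condition defining $\XX_n$ (hence $\YY_n$), which we already know cuts out an open subset. The first clause says $t\in W_{\alpha|n}\cap W_{\beta|n}$, an \emph{open} condition on $t$ in $\Tspace$, whereas membership in $\gcheckinf$ only guarantees $t\in\overline{W_{\alpha|n}}\cap\overline{W_{\beta|n}}$. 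So the openness of $G$ should come from combining the openness of $\YY_n$ with the open condition $t\in W_{\alpha|n}\cap W_{\beta|n}$.

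First I would fix an arbitrary point $z=(\alpha,t,\beta)\in G$ and choose $n\in\nn$ witnessing membership, so that $t\in W_{\alpha|n}\cap W_{\beta|n}$ and $\alpha_k=\beta_k$ for all $k>n$. Set $U=W_{\alpha|n}\cap W_{\beta|n}$, which is open in $\Tspace$ and contains $t$. I then claim that the basic open set $\zcheck_{\alpha|n,\,\beta|n,\,U,\,n}$ is contained in $G$ and contains $z$. That $z$ lies in it is immediate from the definitions together with $\alpha_k=\beta_k$ for $k>n$. For the containment in $G$, take any $(\gamma,s,\delta)\in\zcheck_{\alpha|n,\beta|n,U,n}$: by definition $\gamma|n=\alpha|n$, $\delta|n=\beta|n$, $s\in U=W_{\alpha|n}\cap W_{\beta|n}$, and $\gamma_k=\delta_k$ for $k>n$. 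Then $(\gamma|n,s,\delta|n)=(\alpha|n,s,\beta|n)$ with $s\in W_{\alpha|n}\cap W_{\beta|n}$ is precisely an element of $G_n$, and the tail-equality $\gamma_k=\delta_k$ for $k>n$ supplies the second clause; hence $(\gamma,s,\delta)\in G$. This exhibits $z$ as an interior point, so $G$ is open.

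The one point requiring care is that the witnessing index $n$ and the resulting neighborhood are well-defined and that $\zcheck_{\alpha|n,\beta|n,U,n}$ is genuinely a subset of $\gcheckinf$ rather than of the larger bundle $\Truhf{\Omega}$; this is fine because $\zcheck_{\alpha,\beta,U,n}$ was defined as a subset of $\gcheckinf$ in Definition~\ref{def:basis-G}, so every element already satisfies the $\gcheckinf$-membership condition $s\in\bigcap_N\overline{W_{\gamma|N}}\cap\bigcap_N\overline{W_{\delta|N}}$. I expect the main (though still mild) obstacle to be bookkeeping around indices: one must verify that the single index $n$ chosen for $z$ works uniformly for all nearby points in the chosen neighborhood, but this is guaranteed precisely because the neighborhood freezes the first $n$ coordinates and forces $\gamma_k=\delta_k$ for $k>n$, so the same $n$ witnesses membership for every point of $\zcheck_{\alpha|n,\beta|n,U,n}$.

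Finally, since $G$ is both a subgroupoid of $\gcheckinf$ (by Proposition~\ref{prop:Ginfty_and_badg_principal_and metrizable}) and open (by the argument above), it is an open subgroupoid, completing the proof.
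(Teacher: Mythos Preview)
Your proof is correct and takes essentially the same approach as the paper: the paper phrases it as the decomposition $G=\bigcup_{n}\bigl[\pi_n^{-1}(G_n)\cap\YY_n\bigr]$, noting each piece is open since $\pi_n$ is continuous, $G_n$ is open in $\gcheck_n$, and $\YY_n$ is open, while you verify the same thing pointwise by exhibiting the basic open set $\zcheck_{\alpha|n,\beta|n,U,n}$ (which is precisely a piece of $\pi_n^{-1}(G_n)\cap\YY_n$). The two arguments are the same observation packaged differently.
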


\begin{proof}
We showed in Lemma~\ref{prop:Ginfty_and_badg_principal_and
metrizable} that $G$ is a subgroupoid of $\gcheckinf$. From the
definition of $G$ it follows that
$$
G = \bigcup_{n \in \nn} [\pi_n^{-1}(G_n) \cap \YY_n].
$$
For every $n\in \nn$, $G_n$ is open in $\gcheck_n$
and $\pi_n$ is continuous, hence $\pi^{-1}_n(G_n)$ is
an open subset of $\gcheckinf$. Since $\YY_n$ is an open set for all
$n$, we conclude that $G$ is an open subset of $\gcheckinf$.
\end{proof}

\begin{theorem}\label{thm:properties-G} $G$ is a locally compact, metrizable,
second countable, Hausdorff, principal groupoid. Furthermore, it
is \etale and amenable.
\end{theorem}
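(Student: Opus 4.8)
The plan is to obtain all the topological properties by inheritance and then to concentrate on the two genuinely new assertions, namely that $G$ is \'etale and amenable. Since $G$ is an open subset of $\gcheckinf$ by Lemma~\ref{lem:G_open}, and $\gcheckinf$ is locally compact, metrizable, second countable and Hausdorff by Theorem~\ref{thm:properties gcheckinf}, all four of these properties pass to the subspace $G$ (an open subset of a locally compact Hausdorff space is again locally compact). That $G$ is principal was already recorded in Proposition~\ref{prop:Ginfty_and_badg_principal_and metrizable}. Moreover, since $\gcheckinf^{(0)}$ is open in $\gcheckinf$ and $G$ is an open subgroupoid containing the unit space, $G^{(0)}=\gcheckinf^{(0)}\cap G$ is open in $G$, so $G$ is r-discrete. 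It remains to upgrade r-discreteness to \'etaleness and to establish amenability.

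For \'etaleness I would produce, for each point, an open G-set on which the range map restricts to a homeomorphism onto an open subset of $G^{(0)}$; since the sets $\zcheck_{\alpha,\beta,U,n}\cap G$ form a basis of open G-sets (as noted after Definition~\ref{def:basis-G}), such a family covers $G$ and exhibits $r$ as a local homeomorphism. This is precisely the point where $G$ improves on $\gcheckinf$: the failure of \'etaleness in Example~\ref{ex:gcheck-not-etale} came from range points landing on a boundary $\overline{W}\setminus W$, whereas membership in $G$ forces the base coordinate into the \emph{open} set $W_{\alpha|n}\cap W_{\beta|n}$. Concretely, given $x=(\alpha,t,\beta)\in G$ with witness $n$, I would take $B=\zcheck_{\alpha|n,\beta|n,U,n}\cap G$ with $t\in U\subseteq W_{\alpha|n}\cap W_{\beta|n}$ and show $r(B)=\zcheck_{\alpha|n,\alpha|n,U,n}\cap G$, which is open. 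The nontrivial inclusion is that each unit $(\gamma,s,\gamma)\in G^{(0)}$ with $\gamma|n=\alpha|n$ and $s\in U$ lifts back into $B$: setting $\delta|n=\beta|n$ and $\delta_k=\gamma_k$ for $k>n$, one must check $(\gamma,s,\delta)\in\gcheckinf$, i.e.\ $s\in\overline{W_{\delta|N}}$ for every $N$, which follows from Remark~\ref{r:top1} applied with the open set $W_{\beta|n}$ and $A=\bigcap_{k=n+1}^N V^{(k)}_{\gamma_k}$. The same computation applied to sub-basic open subsets of $B$ shows $r|_B$ is open; being also a continuous bijection onto $r(B)$, it is a homeomorphism, and as such sets $B$ cover $G$, the map $r$ is a local homeomorphism.

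For amenability I would exhibit $G$ as an inductive limit of amenable open subgroupoids in the sense of the Preliminaries. Put $K_n=G\cap\YY_n$. By Remark~\ref{remark:YY-properties} each $\YY_n$ is a compact open subgroupoid of $\gcheckinf$ with $\YY_n^{(0)}=\gcheckinf^{(0)}$, so the $K_n$ are open subgroupoids of $G$, they are nested with union $G$ (since $\gcheckinf=\bigcup_n\YY_n$), and each contains $G^{(0)}$; thus $K_n^{(0)}=G^{(0)}$ and $G$ is the inductive limit of the $K_n$. Being open in the \'etale groupoid $G$, each $K_n$ is itself \'etale and principal, and it has uniformly finite orbits: a point of $K_n$ has the form $(\alpha,t,\beta)$ with $\alpha_k=\beta_k$ for $k>n$, so the orbit of a unit is indexed by the finitely many admissible heads in $\Omega^{(n)}$ and has cardinality at most $|\Omega^{(n)}|$. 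A principal \'etale groupoid with uniformly bounded finite orbits is amenable; I would verify \textbf{Am1}--\textbf{Am2} directly via an approximate invariant mean built from a compactly supported approximate unit $\phi_j\nearrow 1$ on $K_n^{(0)}$, transported over the finite fibres and normalized by the orbit size. Once each $K_n$ is amenable, the cited fact that the inductive limit of amenable groupoids is amenable gives that $G$ is amenable.

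The step I expect to be the main obstacle is the amenability of the pieces $K_n$. The difficulty is that, although the orbits of $K_n$ are finite, the orbit-size function need not be continuous: at a base point $t$ lying on the boundary of some $W_{\alpha|m}$ an admissible head may appear or disappear, so the naive normalized characteristic functions fail to be continuous, and $K_n$ is in general neither compact nor proper. The role of $\phi_j$ is exactly to absorb this boundary discontinuity, while the finiteness of the orbits keeps \textbf{Am1} uniformly bounded; the delicate part is checking that \textbf{Am2} tends to $1$ uniformly on compacta. By contrast, the \'etale argument, though it is the conceptual reason $G$ is the correct groupoid to single out inside $\gcheckinf$, reduces to the single application of Remark~\ref{r:top1} described above.
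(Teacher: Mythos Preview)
Your treatment of the inherited topological properties and of \'etaleness is essentially the paper's own argument: one shows $G^{(0)}=G\cap\gcheckinf^{(0)}$ is open, then for a basic G-set $\zcheck_{\alpha|n,\beta|n,U,n}\cap G$ with $U\subseteq W_{\alpha|n}\cap W_{\beta|n}$ one proves $r$ carries it onto $\zcheck_{\alpha|n,\alpha|n,U,n}\cap G$ by constructing the partner $\delta$ and invoking Remark~\ref{r:top1}. (A small slip: $G$ does \emph{not} contain all of $\gcheckinf^{(0)}$, since a unit $(\alpha,t,\alpha)\in\gcheckinf$ may have $t\in\overline{W_{\alpha|N}}\setminus W_{\alpha|N}$ for every $N$; but your formula $G^{(0)}=G\cap\gcheckinf^{(0)}$ is correct regardless, so the argument stands.)

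For amenability you take a genuinely different route. The paper does not decompose $G$ as an inductive limit; instead it observes that $G$, being open in the closed set $\gcheckinf\subseteq\Truhf{\Omega}$, is \emph{locally closed} in $\Truhf{\Omega}$, and then applies Proposition~5.1.1 of Anantharaman-Delaroche--Renault (a locally closed subgroupoid with open range and source maps of an amenable locally compact groupoid is amenable) together with the amenability of $\Truhf{\Omega}$ from Proposition~\ref{prop:Truhf-amenable}. This is a one-line citation once \'etaleness is in hand. Your inductive-limit strategy via $K_n=G\cap\YY_n$ is legitimate, and the key claim that a second countable principal \'etale groupoid with uniformly bounded finite orbits is amenable is true; but your proposed direct verification of \textbf{Am1}--\textbf{Am2} runs into exactly the obstacle you flag: the orbit-size function on $K_n^{(0)}$ is discontinuous, so ``normalizing by orbit size'' does not produce functions in $C_c(K_n)$, and the approximate-unit smoothing you suggest is genuinely delicate to make converge to $1$ rather than to the orbit size. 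The quickest way to rescue your outline is to note that each $K_n$ is itself locally closed in the compact (hence amenable) groupoid $\XX_n$ and apply the very same Anantharaman-Delaroche--Renault proposition to $K_n$; but at that point one may as well apply it once to $G$ directly, as the paper does, and dispense with the inductive limit entirely.
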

\begin{proof} As a subgroupoid of $\gcheckinf$, $G$ inherits the properties
of being second countable, Hausdorff and metrizable, and by
Proposition~\ref{prop:Ginfty_and_badg_principal_and metrizable}
it is principal. Furthermore, by Lemma~\ref{lem:G_open}, $G$ is
open in $\gcheckinf$ and the latter is locally compact, hence $G$
is locally compact.

In order to prove the $G$ is \etale, first we must show that
$G^{(0)}$ is open in $G$. This follows immediately from the
observation that $G^{(0)}=G \cap \gcheckinf^{(0)}$, and both are
open subsets of $\gcheckinf$ by Lemma~\ref{lem:G_open} and
since $\gcheckinf$ is r-discrete. Next we prove that $r$
is a local homeomorphism by showing that is is open. Since $G$ has
a basis of open G-sets, it follows that $r$ is a local homeomorphism
if and only if it is open.

Let $\mathcal{O}$ be a non-empty open subset of $G$, and let
$x=(\alpha, t, \beta) \in \mathcal{O}$. Let $Z_x=\check{Z}_{\alpha,
\beta, U, n} \cap G$ be an open neighborhood of $x$ inside
$\mathcal{O}$. Since $x\in G$, there exists $k \in \nn$ such that $t
\in W_{\alpha|k} \cap W_{\beta|k}$ and $\alpha_i = \beta_i$ for
$i>k$. We may assume without loss of generality that $n>k$, and also
that $U \subseteq W_{\alpha|k} \cap W_{\beta|k}$. We claim that
$r(Z_x) = \check{Z}_{\alpha, \alpha, U, n} \cap G$. It is clear that
$r(Z_x) \subseteq \check{Z}_{\alpha, \alpha, U, n} \cap G$. Fix
$(\gamma, s, \gamma) \in \check{Z}_{\alpha, \alpha, U, n} \cap G$.
Then we have that $s \in U$, $s \in \overline{W_{\gamma|j}}$ for all
$j$, and
$$
\gamma  = (\alpha_0, \alpha_1, \dots, \alpha_k, \alpha_{k+1},
\dots,\alpha_n, \gamma_{n+1}, \gamma_{n+2}, \dots).
$$
We define
$$
\delta  = (\beta_0, \beta_1, \dots, \beta_k, \alpha_{k+1},
\dots,\alpha_n, \gamma_{n+1}, \gamma_{n+2}, \dots),
$$
keeping in mind that $\alpha_{k+1}, \dots,\alpha_n = \beta_{k+1},
\dots,\beta_n$. Obviously $r(\gamma, s, \delta) = (\gamma, s,
\gamma)$. The fact that $(\gamma, s, \delta) \in \check{Z}_{\alpha,
\beta, U, n} \cap G=Z$ follows from the following observations:
\begin{enumerate}
\item
$s\in U \subseteq W_{\alpha|k} \cap W_{\beta|k} = W_{\gamma|k} \cap
W_{\delta|k}$, therefore $(\gamma|k,s,\delta|k) \in G_k$.
\item
$\gamma_i = \delta_i$ for all $i >k$.
\item
For all $j$, $s \in \overline{W_{\gamma|j}}$ .
\item
For all $j$, $s \in \overline{W_{\delta|j}}$ : for $j\leq k$ we have
that $s\in U \subseteq W_{\beta|j} = W_{\delta|j}$, and for $j> k$,
by Remark~\ref{r:top1}, we have that $s \in U \cap
\overline{W_{\gamma|j}} \subseteq \overline{U \cap W_{\gamma|j}}
\subseteq \overline{W_{\delta|j}}$ .
\end{enumerate}
We conclude that $r(Z_x) = \check{Z}_{\alpha, \alpha, U, n} \cap G $
is an open set, which is clearly contained in $ r(\mathcal{O})$. It
follows that $r(\mathcal{O})$ is open, since
$r(\mathcal{O})=\bigcup_x r(Z_x)$.

Finally, in order to prove amenability of $G$ we employ Proposition
5.1.1 of~\cite{renault-anantharaman-delaroche}, which states that if
$H$ is a locally closed subgroupoid of an amenable locally compact
groupoid $H'$ and the source and range maps of $H$ are open then $H$
is amenable. From Lemma~\ref{lem:G_open} we have that $G$ is open in
$\gcheckinf$, therefore there exists an open set $A$ in
$\Truhf{\Omega}$ such that $G = A \cap \gcheckinf$; since
$\gcheckinf$ is closed in $\Truhf{\Omega}$, we conclude that $G$ is
the intersection of open and closed subsets of $\Truhf{\Omega}$,
hence it is locally closed (see section I.3.3
of~\cite{bourbaki-topology1}). We have already seen that $G$ is
\'etale, therefore its range and source maps are open. By
Proposition~\ref{prop:Truhf-amenable}, $\Truhf{\Omega}$ is amenable,
hence we conclude that $G$ is amenable.
\end{proof}

\section{Maps between Groupoid C*-Algebras}

We present two propositions which are general, both of the same
nature: under certain assumptions on two groupoids, we obtain an
isometric $*$-homomorphism between the corresponding groupoid
$C^*$-algebras. The composition of the these maps will play a key
role in our main theorem.


\begin{prop}\label{prop:CQ-into-CH}
Let $\TG$ and $Q$ be locally compact Hausdorff principal \'etale
groupoids endowed with the respective counting Haar systems, and let
$\sigma\in Z^2(Q, \torus)$. Let $\pi: \TG \to Q$ be a surjective
continuous proper groupoid homomorphism, and suppose $Q$ acts on
$\TG$ to the left. Suppose the action is free, transitive and that
$\pi$ is equivariant with respect to the action. Then the map
$\pi^*:C_c(Q, \sigma) \to C_c(\TG, \pi^*\sigma)$ given by $\pi^*f =
f\circ \pi$ is an isometric $*$-homomorphism with respect to reduced
$C^*$-norms, which therefore extends to the reduced $C^*$-algebras.
\end{prop}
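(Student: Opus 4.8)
The plan is to verify that $\pi^*$ is a well-defined $*$-homomorphism at the level of $C_c$, and then to realize it concretely on the reduced representation so that isometry becomes transparent. First I would check well-definedness: since $\pi$ is continuous and proper, for $f\in C_c(Q,\sigma)$ the function $f\circ\pi$ lies in $C_c(\TG)$ (properness guarantees compact support pulls back to compact support). The pulled-back cocycle $\pi^*\sigma(x,y)=\sigma(\pi(x),\pi(y))$ is a genuine element of $Z^2(\TG,\torus)$, so $C_c(\TG,\pi^*\sigma)$ makes sense. That $\pi^*$ respects the twisted convolution and involution is a formal computation: writing out $(\pi^*f)\ast(\pi^*h)(x)$ and using that $\pi$ is a groupoid homomorphism together with the fact that $\pi$ maps $\TG^{d(x)}$ bijectively onto $Q^{d(\pi(x))}$ (here is where freeness and \'etaleness enter, since both are counting-measure groupoids and the fibers must match up) should identify it with $\pi^*(f\ast h)(x)$. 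I would note this fiber bijection is exactly the content of the free transitive action combined with equivariance.

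\emph{The heart of the matter} is the isometry claim for the reduced norms, and here I would exploit the concrete description of $\|\cdot\|_r$ via the induced representations $\Ind_\mu$ for unit point masses, using the saturation trick \eqref{eq:saturation-trick}. The strategy is to show that for each unit $u\in \TG^{(0)}$, the induced representation $\Ind_{\delta_u}$ of $C_c(\TG,\pi^*\sigma)$ precomposed with $\pi^*$ is unitarily equivalent to the induced representation $\Ind_{\delta_{\pi(u)}}$ of $C_c(Q,\sigma)$. The natural candidate for the intertwining unitary is the map $U:\ell^2(Q_{\pi(u)})\to \ell^2(\TG_u)$ induced by the bijection $\TG_u \to Q_{\pi(u)}$, $x\mapsto \pi(x)$; this bijection is precisely what equivariance of $\pi$ together with freeness and transitivity of the $Q$-action on $\TG$ provides. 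One then checks that $U$ conjugates $\Ind_{\delta_{\pi(u)}}(f)$ to $\Ind_{\delta_u}(\pi^*f)$, which is a direct matter of substituting the cocycle identity $\pi^*\sigma=\sigma\circ(\pi\times\pi)$ into the definitions.

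Because $\pi$ is surjective, as $u$ ranges over $\TG^{(0)}$ the point $\pi(u)$ ranges over all of $Q^{(0)}$; hence taking suprema over the two families of induced representations gives $\|\pi^*f\|_r = \|f\|_r$ exactly. I would present the unitary equivalence first and then deduce the equality of reduced norms as a corollary, after which the extension to $C^*_r(Q,\sigma)\to C^*_r(\TG,\pi^*\sigma)$ is automatic by continuity and density of $C_c$.

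\textbf{The main obstacle} I anticipate is not any single computation but pinning down the fiber bijection $\TG_u\to Q_{\pi(u)}$ and showing it is a genuine bijection rather than merely a surjection or injection. Surjectivity should follow from surjectivity of $\pi$ combined with the transitivity of the action (any element of $Q_{\pi(u)}$ can be moved to a preimage lying over $u$), while injectivity is exactly freeness of the action; I would want to state this matching of source-fibers as a clean preliminary lemma, since the entire isometry argument rests on it and on the compatibility of the counting Haar systems under this bijection.
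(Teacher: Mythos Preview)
Your proposal is correct and follows essentially the same route as the paper: well-definedness via properness, the fiber bijection $\TG_u \to Q_{\pi(u)}$ isolated as a lemma (proved from freeness, transitivity, and equivariance exactly as you suggest), and then unitary equivalence of $\Ind_{\delta_u}\circ\pi^*$ with $\Ind_{\delta_{\pi(u)}}$ via the unitary induced by that bijection, with surjectivity of $\pi$ on unit spaces giving the equality of reduced norms. The only minor deviation is that the paper does not invoke the saturation trick here, since surjectivity of $\pi$ already guarantees that every unit point mass on $Q^{(0)}$ is of the form $\epsilon_{\pi(u)}$.
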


\begin{proof}

The map is well-defined because $\pi$ is continuous and proper,
hence given $f\in C_c(Q)$, the function $f\circ \pi$ is also
continuous and compactly supported on $\TG$.

It is straightforward to check that $\pi^*$ is linear and
$*$-preserving. We now show that it is multiplicative. Let $f, g \in
C_c(Q,\sigma)$ and fix $x\in \TG$.
\begin{align*}
(\pipull{f} * \pipull{g}) (x)
   & = \int_{\TG} \pipull{f}(xy) \pipull{g} (y^{-1}) \pipull{\sigma} (xy,y^{-1})
   d\lambda_\TG^{d(x)}(y)
\end{align*}
Recall that $\lambda^u$ is a measure supported on $\TG^u$ and
that $\pipull{\sigma} (x,y) = \sigma (\pi(x),\pi(y))$ by the
definition of a pullback cocycle. Thus, since $\pi$ is a groupoid
homomorphism, we obtain:
\begin{align*}
   (\pipull{f} * \pipull{g}) (x) =\int_{\TG^{d(x)}} f(\pi(x)\pi(y))
   g(\pi(y)^{-1}) \sigma (\pi(x)\pi(y),\pi(y)^{-1})
   d\lambda_\TG^{d(x)}(y)
\end{align*}

We state the next argument as a lemma.
\begin{lemma}
Suppose $u\in \TG^{(0)}$ and let $v=\pi(u)$. Then $\pi: \TG_u \to
Q_v$ is a bijection.
\end{lemma}

\begin{proof}
Take $x\in \TG_u$. By definition $d(x) = u = d(u)$. Therefore, since
the action is transitive, there exists $\gamma \in Q$ such that
$x=\gamma\cdot u$.

Since the action is free, $\gamma$ is determined uniquely. Indeed,
suppose $\gamma\cdot u = \widetilde{\gamma} \cdot u$. Then
$\gamma^{-1}\gamma\cdot u = \gamma^{-1}\widetilde{\gamma} \cdot u$,
so $d(\gamma) \cdot u = \gamma^{-1}\widetilde{\gamma} \cdot u$, and
since $d(\gamma) = r(u)$ and $r(u) \cdot u = u$ we see that $u =
\gamma^{-1}\widetilde{\gamma} \cdot u$. Freeness now implies that
$\gamma^{-1}\widetilde{\gamma} = d(\gamma^{-1}\widetilde{\gamma}) =
r(u) = d(\gamma)$. Thus $\gamma \gamma^{-1}\widetilde{\gamma} =
\gamma d(\gamma)= \gamma$, so $r(\gamma) \widetilde{\gamma} =
\gamma$. But $r(\gamma) = r(\gamma \cdot u) = r(\widetilde{\gamma}
\cdot u) = r(\widetilde{\gamma})$, hence $r(\widetilde{\gamma})
\widetilde{\gamma} = \gamma$, proving $\widetilde{\gamma}= \gamma$.

>From the equivariance of $\pi$ with respect to the action, it
follows that $$\pi(x) = \pi(\gamma \cdot u) = \gamma \pi(u) = \gamma
v .$$ In particular this shows that $d(\gamma) = v$, hence $\gamma
\in Q_v$ and $\gamma v = \gamma$. We conclude that $\pi(x) = \gamma$
for a unique $\gamma \in Q_v$, and therefore we have a bijection.
\end{proof}

We return to the proof of the proposition. By considering inverses,
we have a bijection $\pi: \TG^u \to Q^v$. So the map $\pi:\TG^{d(x)}
\to Q^{\pi(d(x))}$ is a bijection. Now $\pi(d(x)) = \pi(xx^{-1}) =
\pi(x) \pi(x^{-1}) = d(\pi(x))$, so $\pi:\TG^{d(x)} \to
Q^{d(\pi(x))}$ is a bijection and per force it is measure preserving
with respect to the counting Haar systems. It follows that:
\begin{align*}
   (\pipull{f} * \pipull{g}) (x) & =  \int_{Q^{d(\pi(x))}} f(\pi(x)z)
   g(z^{-1}) \sigma (\pi(x)z,z^{-1}) d\lambda_Q^{d(\pi(x))}(z)\\
    & =(f * g) (\pi(x)) \\
    & = \pipull{(f * g)} (x)
\end{align*}
Thus $\pi^*$ is multiplicative, and hence a $*$-homomorphism.

We prove next that $\pipull{}$ is an isometry.  Let $v\in \TG^{(0)}$
and let $\delta_v$ be the probability measure on $\TG^{(0)}$
concentrated on $v$. Let $\epsilon_v$ be the probability measure on
$Q^{(0)}$ concentrated on $\pi(v)$. We claim that the following
equality holds:
\begin{equation}\label{eq:isom-again}
\| \Ind^\TG_{\delta_v}(\pipull{f}) \| = \| \Ind^Q_{\epsilon_v}(f) \|
\end{equation}
By definition, $\Ind^\TG_{\delta_v}(\pipull{f})$ acts on $L^2(\TG,
\nu^{-1})$, where in this case $\nu=\int \lambda_\TG^u d\delta_v(u)
= \lambda_\TG^v$, hence $\nu^{-1}$ is counting measure on $\TG_v$
and $L^2(\TG, \nu^{-1})\simeq\ell^2(\TG_v)$. Similarly,
$\Ind^Q_{\epsilon_v}(f)$ acts on $\ell^2(Q_{\pi(v)})$.

The Haar system of $\TG$ is given by counting measures, therefore we
can write for $\xi \in \ell^2(\TG_v)$,
$$
[\Ind^\TG_{\delta_v}(\pipull{f})\xi](x) = \sum_{y\in \TG^v}
\pipull{f}(xy) \xi(y^{-1}) \pipull{\sigma}(xy,y^{-1})
$$
Since the map $\pi: \TG_v \to Q_{\pi(v)}$ is a bijection, we have a
unitary operator $W:\ell^2(Q_{\pi(v)}) \to \ell^2(\TG_v)$ given by
$W\psi= \psi\circ \pi$. We now show that
$$
    \Ind^\TG_{\delta_v}(\pipull{f}) W = W \Ind^Q_{\epsilon_v}(f).
$$
Indeed, if $x\in \TG_v$,
\begin{align*}
[\Ind^\TG_{\delta_v}(\pipull{f})W\psi](x) & = \sum_{y\in \TG^v}
\pipull{f}(xy) [W\psi](y^{-1})
\pipull{\sigma}(xy,y^{-1}) \\
& =\sum_{y\in \TG^v} f(\pi(x)\pi(y)) \psi(\pi(y)^{-1})
\sigma(\pi(x)\pi(y),\pi(y)^{-1}) \\
& = \sum_{z\in Q^{\pi(v)}} f(\pi(x)z) \psi(z^{-1})
\sigma(\pi(x)z,z^{-1}) \\
&= [\Ind^Q_{\epsilon_v}(f)\psi](\pi(x))\\
& = [W\Ind^Q_{\epsilon_v}(f)\psi](x)
\end{align*}
Therefore the operators $\Ind^\TG_{\delta_v}(\pipull{f})$ and
$\Ind^Q_{\epsilon_v}(f)$ are unitarily equivalent, so they share the
same norm. Thus we have obtained equation~\eqref{eq:isom-again}.

Finally, since every unit point mass on $Q$ is of the form
$\epsilon_v$ for some $v\in \TG$, we have:
\begin{align*}
\|\pipull{f}\|_{C^*_r(\TG,\pipull{\sigma}})
  &= \sup\{ \| \Ind^\TG_{\delta_v}(\pipull{f})\| : v\in \TG^{(0)} \} \\
  &= \sup\{ \| \Ind^Q_{\epsilon_v}(f)\| : v \in \TG^{(0)} \} \\
  &= \sup\{ \| \Ind^Q_\mu(f)\| : \mu \text{ is
a unit point mass on } Q^{(0)} \} \\
            &= \|f\|_{C^*_r(Q, \sigma)}
\end{align*}
\end{proof}


\begin{prop}\label{prop:amenable-inclusion}
Let $R$ be a locally compact, Hausdorff, \'etale amenable groupoid
admitting as left Haar system the counting measures system and let
$\sigma\in Z^2(R,\torus)$. Suppose that $\tg$ is an open
subgroupoid of $R$ endowed with the restriction Haar system, and
let $\rho: C_c(\tg,\sigma) \hookrightarrow C_c(R,\sigma)$ be the
natural inclusion obtained by extending functions as identically
zero outside $\tg$. Then $\rho$ is an isometric $*$-homomorphism,
and thus extends to the groupoid $C^*$-algebras.
\end{prop}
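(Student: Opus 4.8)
The plan is to verify that $\rho$ is a well-defined $*$-homomorphism, then to obtain the isometry by passing to reduced norms and decomposing the induced representations of $R$ into blocks governed by $Z$.

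First I would check well-definedness and the algebraic properties. If $f\in C_c(Z)$ has support $K\subseteq Z$, then $K$ is compact in $R$ as well, so $Z$ and $R\setminus K$ form an open cover of $R$ on which the extension $\rho f$ equals $f$ and $0$ respectively; hence $\rho f\in C_c(R)$. Because $Z$ is a subgroupoid, in the $R$-convolution $(\rho f\ast\rho g)(x)=\sum_{y\in R^{d(x)}}\rho f(xy)\rho g(y^{-1})\sigma(xy,y^{-1})$ a summand is nonzero only when $xy\in Z$ and $y\in Z$, which forces $x=(xy)y^{-1}\in Z$; the surviving sum runs over $y\in Z^{d(x)}$ and reproduces the $C_c(Z,\sigma)$-convolution when $x\in Z$, and is $0$ otherwise, so $\rho(f\ast g)=\rho f\ast\rho g$. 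The involution is handled identically using $x\in Z\iff x^{-1}\in Z$. At each step one uses only that the Haar system and the cocycle on $Z$ are the restrictions of those on $R$.

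Next, since $R$ is amenable we have $C^*(R,\sigma)=C^*_r(R,\sigma)$. I would also note that $Z$ is amenable: being open in the \'etale groupoid $R$ it is itself \'etale, so its range and source maps are open, and as an open subset it is locally closed, whence Proposition~5.1.1 of \cite{renault-anantharaman-delaroche} applies exactly as in the proof of Theorem~\ref{thm:properties-G}. Thus $C^*(Z,\sigma)=C^*_r(Z,\sigma)$ too, and it suffices to prove $\rho$ is isometric for the reduced norms, which by the disintegration formula of the preliminaries are computed as $\|g\|_r=\sup_v\|\Ind_{\delta_v}(g)\|$ over unit point masses.

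The heart of the proof is the analysis of $\Ind^R_{\delta_v}(\rho f)$ on $\ell^2(R_v)$, following Proposition~\ref{prop:CQ-into-CH}. Its matrix entries are $\langle\delta_x,\Ind^R_{\delta_v}(\rho f)\delta_{x'}\rangle=\rho f(xx'^{-1})\sigma(xx'^{-1},x')$, which vanish unless $xx'^{-1}\in Z$, and this forces $r(x),r(x')\in Z^{(0)}$. Hence the operator is supported on $\ell^2(R_v^Z)$, where $R_v^Z=\{x\in R_v:r(x)\in Z^{(0)}\}$, and on $R_v^Z$ the relation $x\sim x'\iff xx'^{-1}\in Z$ is an equivalence relation whose classes are the left cosets $Zx_0$; the operator is block diagonal for this partition. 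For a class $C=Zx_0$ with $w=r(x_0)\in Z^{(0)}$, the bijection $Z_w\to C$, $z\mapsto zx_0$, together with the diagonal unitary $(U\eta)(zx_0)=\sigma(z,x_0)\eta(z)$, conjugates $\Ind^Z_{\delta_w}(f)$ onto the block; the needed identity $\sigma(zz'^{-1},z'x_0)=\sigma(z,x_0)\,\sigma(zz'^{-1},z')\,\overline{\sigma(z',x_0)}$ is exactly the $2$-cocycle condition applied to the triple $(zz'^{-1},z',x_0)$. Consequently $\|\Ind^R_{\delta_v}(\rho f)\|=\sup_C\|\Ind^Z_{\delta_{w_C}}(f)\|\le\|f\|_{C^*_r(Z,\sigma)}$, and taking the supremum over $v\in R^{(0)}$ gives $\|\rho f\|\le\|f\|$. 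For the reverse inequality I observe that when $v\in Z^{(0)}$ the coset $Zv=Z_v$ occurs with $x_0=v$, and there the block is literally $\Ind^Z_{\delta_v}(f)$, so $\|\rho f\|\ge\sup_{v\in Z^{(0)}}\|\Ind^Z_{\delta_v}(f)\|=\|f\|_{C^*_r(Z,\sigma)}$, yielding equality.

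I expect the main obstacle to be the cocycle bookkeeping in the coset decomposition: producing the diagonal twist $U$ that makes the block of $\Ind^R_{\delta_v}(\rho f)$ over $C=Zx_0$ unitarily equivalent to $\Ind^Z_{\delta_{r(x_0)}}(f)$, and reducing the verification to the single application of the $2$-cocycle identity above. A secondary subtlety requiring care is that $Z^{(0)}$ may be strictly smaller than $R^{(0)}$, since subgroupoids here need not share the unit space; this is precisely why one first restricts to $R_v^Z$ before the relation $\sim$ becomes an honest equivalence relation.
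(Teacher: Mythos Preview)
Your argument is correct, and for the inequality $\|\rho f\|\ge\|f\|$ it is essentially the paper's argument: restricting $\Ind^R_{\delta_v}(\rho f)$ to the subspace $\ell^2(Z_v)\hookrightarrow\ell^2(R_v)$ (the paper's isometry $W$) recovers $\Ind^Z_{\delta_v}(f)$, which is exactly your observation that the coset $Zv=Z_v$ occurs as a block when $v\in Z^{(0)}$.

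The difference lies in the direction $\|\rho f\|\le\|f\|$. The paper obtains this directly at the level of \emph{full} norms: any bounded representation of $C_c(R,\sigma)$, composed with $\rho$ and cut down to its nondegenerate part, yields a bounded representation of $C_c(Z,\sigma)$, so the supremum defining $\|\rho f\|_{C^*(R,\sigma)}$ is over a smaller family than that defining $\|f\|_{C^*(Z,\sigma)}$. This avoids any cocycle manipulation. You instead stay in the reduced picture and perform the full block decomposition of $\Ind^R_{\delta_v}(\rho f)$ over left $Z$-cosets in $R_v$, producing the diagonal twist $U$ and invoking the $2$-cocycle identity for $(zz'^{-1},z',x_0)$ to identify each block with some $\Ind^Z_{\delta_w}(f)$. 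Your route is more work but yields a finer structural statement---each induced representation of $\rho f$ on $R$ is a direct sum of induced representations of $f$ on $Z$---whereas the paper's route is a one-line soft argument. Both are valid; the paper simply trades the cocycle bookkeeping (which you correctly anticipated as the main obstacle) for the abstract universal property of the full norm.
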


\begin{proof}

It is clear that the map $\rho$ is well-defined: let $f\in C_c(\tg)$ with
$K=\supp f$ compact. The set $K$ is closed in $R$ since $R$ is
Hausdorff, and $f$ is zero on $\partial K$. Since $\tg$ is open, the
extension to $R$ is continuous and compactly supported.

It is straightforward to check that $\rho$ is linear and
$*$-preserving. We now show that $\rho$ is multiplicative. Let $f, g
\in C_c(\tg)$. Since $\supp \rho(f) \subseteq \tg$ and $\supp \rho(g)
\subseteq \tg$, it follows that $\supp(\rho(f) \ast \rho(g)) \subseteq \tg$
because $\tg$ is closed under the groupoid operations of $R$. Thus
it suffices to verify that $[\rho(f)\ast \rho(g)](x) = [\rho(f \ast g)](x)$
for all $x\in \tg$. Fix $x\in \tg$.
\begin{align*}
[\rho(f) * \rho(g)] (x)
   & = \int_{R} \rho(f)(xy) \rho(g)(y^{-1}) \sigma (xy,y^{-1})
   d\lambda^{d(x)}(y)\\
   &= \int_\tg \rho(f)(xy) \rho(g)(y^{-1}) \sigma (xy,y^{-1})
   d\lambda^{d(x)}(y)\\
   & =\int_{\tg} f(xy) g(y^{-1}) \sigma (xy,y^{-1})
   d\lambda^{d(x)}(y)\\
   &= (f*g)(x)\\
   &= [\rho(f*g)](x)
\end{align*}
where we used the fact that the Haar system of $\tg$ is the
restriction of the Haar system of $R$.

We claim that $\rho$ is an isometry:
$$
\| \rho(f) \|_{C^*(R,\sigma)} = \| f \|_{C^*(\tg,\sigma)}, \qquad
\forall f \in C_c(\tg,\sigma) .
$$
Observe that bounded representations of $C_c(R,\sigma)$ give rise to
bounded representations of $C_c(\tg,\sigma)$ by composition with
$\rho$ and taking cut-downs to ensure non-degeneracy. Thus we have
$$
 \| \rho(f) \|_{C^*(R,\sigma)} \leq \| f \|_{C^*(\tg,\sigma)}, \qquad
\forall f \in C_c(\tg,\sigma).
$$

In order to prove the opposite inequality, recall that $\tg$ is an
open subgroupoid of $R$ amenable, hence it is also amenable (see
Proposition 5.1.1 in \cite{renault-anantharaman-delaroche}). Thus
it suffices to convert to the reduced norms, and show that
$$
\| \rho(f) \|_{C_r^*(R,\sigma)} \geq \| f \|_{C_r^*(\tg,\sigma)},
\qquad \forall f \in C_c(\tg,\sigma).
$$
So let $\delta$ be a probability measure on $\tg^{(0)}$ concentrated
on a unit $v$. We will abuse notation slightly and denote also by
$\delta$ the probability measure on $R^{(0)}$ supported on $v$. By
definition, $\Ind^R_\delta(f)$ acts on $L^2(R,\nu^{-1})$, where in
this case $\nu=\int \lambda^u d\delta(u) = \lambda^v$, hence
$\nu^{-1}$ is counting measure on $R_v$ and
$L^2(R,\nu^{-1})\simeq\ell^2(R_v)$. Furthermore, we can write for
$\psi \in \ell^2(R_v)$,
$$
[\Ind_\delta^R(\rho(f))\psi](x) = \sum_{y\in R^v} \rho(f)(xy)
\psi(y^{-1}) \sigma(xy,y^{-1}).
$$
The operator $\Ind^\tg_{\delta}(f)$ acts analogously on
$\ell^2(\tg_v)$.

Consider the inclusion operator $W:\ell^2(\tg_v) \to \ell^2(R_v)$
where $W\xi$ is the extension of $\xi$ as identically zero outside
of $\tg_v$. Then $W$ is an isometry and $W^*:\ell^2(R_v) \to
\ell^2(\tg_v)$ is the restriction operator $W^*\psi =
\psi\downharpoonright_{\tg_v}$. Now observe that given $\psi \in
\ell^2(\tg_v)$ and $x\in \tg_v$,
\begin{align*}
(\Ind^R_\delta(\rho(f))W \psi)(x) & = \sum_{y\in R^v} \rho(f)(xy)
W\psi(y^{-1}) \sigma(xy,y^{-1}) \\
& = \sum_{y\in \tg^v} \rho(f)(xy)
\psi(y^{-1}) \sigma(xy,y^{-1}) \\
& = \sum_{y\in \tg^v} f(xy)
\psi(y^{-1}) \sigma(xy,y^{-1}) \\
& = (\Ind^\tg_\delta(f)\psi)(x) \\
& = W(\Ind^\tg_\delta(f)\psi)(x)
\end{align*}
If $x \not\in \tg_v$ then $xy \not\in \tg$, so $\rho(f)(xy) =0$ and
the whole expression is zero. Thus for any $\psi \in \ell^2(\tg_v)$
we have
$$ (\Ind^R_\delta(\rho(f))W \psi) = W(\Ind^\tg_\delta(f)\psi) .$$
Since $W$ corresponds to extension with zero outside of $\tg_v$ and
the norm is the $\ell^2$ norm, we conclude that:
$$
\| \Ind^R_\delta(\rho(f))W \psi \| = \| W \Ind^\tg_\delta(f)\psi \|
= \| \Ind^\tg_\delta(f)\psi \|
$$
This allows us to obtain the following inequality:
\begin{align*}\label{aha}
\| \Ind^R_\delta(\rho(f)) \| &= \sup \{ \|
\Ind^R_\delta(\rho(f))\psi \|  : \psi \in \ell^2(R_v), \| \psi\|
\leq 1 \} \\
& \geq \sup \{ \| \Ind^R_\delta(\rho(f))W \psi \|  : \psi \in
\ell^2(\tg_v), \| \psi\|
\leq 1 \} \\
& = \sup \{ \| \Ind^\tg_\delta(f) \psi \|  : \psi \in \ell^2(\tg_v),
\| \psi\|
\leq 1 \} \\
& = \| \Ind^\tg_\delta(f) \|
\end{align*}
Therefore we have:
\begin{align*}
\|\rho(f)\|_{C^*_r(R, \sigma)} & = \sup\{ \|
\Ind^R_\delta(\rho(f))\| : \delta \text{ is
a unit point mass on } R^{(0)} \} \\
& \geq \sup\{ \| \Ind^R_\delta(\rho(f))\| : \delta \text{ is
a unit point mass on } \tg^{(0)} \} \\
            &\geq  \sup\{ \| \Ind^\tg_\delta(f)\| : \delta \text{ is
a unit point mass on } \tg^{(0)} \} \\
            &= \|f\|_{C^*_r(\tg, \sigma)}
\end{align*}
This completes the proof.
\end{proof}

\section{The $C^*$-algebra of $G$}

Our main theorem asserts that under a certain assumption regarding
the cocycles, for every $n$ there is an isometric $*$-homomorphism
$$\varphi_n: C^*_r(G_n, \sigma_n) \to C^*(G, \sigma).$$ In order to
state our theorem precisely, we require the following definition.

\begin{mydef}
Given a cocycle $\sigma_n \in Z^2(G_n, \torus)$, we will say that it
extends to $\gcheck_n$ if there exists $\check{\sigma_n} \in
Z^2(\gcheck_n, \torus)$ such that $\sigma_n = \check{\sigma}_n$ on
$G_n^{(2)}$. Observe that such an extension is unique if it exists.
We denote by $Z_{\text{ext}}^2(G_n,\torus)$ the subgroup of cocycles
which can be extended to $\gcheck_n$.
\end{mydef}

A continuous 2-cocycle $\sigma_n \in Z^2(G_n, \torus)$ which extends
to $\gcheck_n$, will also extend to a continuous 2-cocycle $\sigma
\in Z^2(G, \torus)$, by means of pullback: Recall the maps $\pi_N:
\gcheckinf \to \gcheck_N$ given by $\pi_N(\alpha, t,\beta) = (\alpha|N,
t, \beta|N)$, introduced in Lemma~\ref{lem:pi}. When
restricted to $G$, the maps $\pi_N: G \to \gcheck_N$ remain
continuous groupoid homomorphisms, although they are no longer
surjective. Nevertheless, if $\check{\sigma}_n \in Z^2(\gcheck_n, \torus)$
then $\sigma=\pi_n^*\check{\sigma}_n$ is a continuous 2-cocycle in
$Z^2(G, \torus)$.

We can now state our main theorem.
\begin{theorem}\label{thm:main}
For every $n \geq 0$ and $\sigma_n\in Z^2_\text{ext}(G_n, \torus)$,
denote by $\check{\sigma}_n$ its extension in $Z^2(\gcheck_n,
\torus)$. Let $\sigma=\pi_n^*\check{\sigma}_n \in Z^2(G, \torus)$ be
the pullback cocycle obtained from the map $\pi_n: G \to \gcheck_n$.
Then the map $\varphi_n:C_c(G_n, \sigma_n) \to C_c(G, \sigma)$ given
by
$$
[\varphi_n(f)](x) = \left\{
  \begin{array}{ll}
    f(\pi_n(x)) \ \ \ &  x\in \pi_n^{-1}(G_n)\cap \YY_n \\
    0 \ \ \ & \hbox{otherwise}
  \end{array}
\right.
$$
is an isometric $*$-homomorphism, which extends to an isometric
$*$-homomorphism $\varphi_n: C_r^*(G_n, \sigma_n) \to C^*(G,
\sigma)$.
\end{theorem}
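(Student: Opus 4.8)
The plan is to realize $\varphi_n$ as a composition $\varphi_n = \rho \circ \pi_n^*$ of two isometric $*$-homomorphisms of the types produced by Propositions~\ref{prop:CQ-into-CH} and~\ref{prop:amenable-inclusion}, factoring through the intermediate groupoid
$$
H_n := \pi_n^{-1}(G_n) \cap \YY_n .
$$
Since $\pi_n$ is a groupoid homomorphism and $G_n$ is a subgroupoid of $\gcheck_n$, the set $\pi_n^{-1}(G_n)$ is closed under the operations of $\gcheckinf$, so intersecting with the subgroupoid $\YY_n$ shows that $H_n$ is a subgroupoid. The proof of Lemma~\ref{lem:G_open} already exhibits $\pi_n^{-1}(G_n)\cap\YY_n$ as an open subset of $\gcheckinf$ contained in $G$, whence $H_n$ is an open subgroupoid of $G$. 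As $G$ is locally compact Hausdorff principal \'etale by Theorem~\ref{thm:properties-G}, the open subgroupoid $H_n$ inherits all these properties, and being open in the amenable groupoid $G$ it is itself amenable (Proposition~5.1.1 of~\cite{renault-anantharaman-delaroche}); in particular $C^*(H_n,\cdot)=C^*_r(H_n,\cdot)$.

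Next I would check that the restriction $\pi_n|_{H_n}: H_n \to G_n$ satisfies the hypotheses of Proposition~\ref{prop:CQ-into-CH}. Surjectivity is proved exactly as in Lemma~\ref{lem:pi}: given $(x,t,y)\in G_n$, extend $x$ and $y$ by a common tail $\gamma_k$ ($k>n$) chosen with $t\in V^{(k)}_{\gamma_k}$, obtaining a preimage in $H_n$. For properness, note that $H_n\subseteq\YY_n$ and $\YY_n$ is compact by Remark~\ref{remark:YY-properties}; if $K\subseteq G_n$ is compact then $K$ is closed in the Hausdorff space $\gcheck_n$, so $(\pi_n|_{H_n})^{-1}(K)=\pi_n^{-1}(K)\cap\YY_n$ is a closed subset of the compact set $\YY_n$, hence compact. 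The required left action of $G_n$ on $H_n$ is defined, with anchor $r(\alpha,t,\beta):=(\alpha|n,t,\alpha|n)$, by letting $(\mu,t,\alpha|n)\in G_n$ act on $(\alpha,t,\beta)\in H_n$ by replacing the first $n+1$ coordinates of $\alpha$ with $\mu$ while leaving the common tail $\alpha_k=\beta_k$ ($k>n$) untouched; Remark~\ref{r:top1} guarantees that the resulting triple again lies in $H_n$, since $t\in W_\mu$ is interior and $t\in\overline{W_{\alpha|N}}$ for all $N$. A direct verification shows this action is free (the head is fixed only when $\mu=\alpha|n$), transitive on source fibers (two elements of $H_n$ with equal source share their tails by membership in $\YY_n$, and the differing heads are joined by an element of $G_n$), and that $\pi_n$ is equivariant. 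Proposition~\ref{prop:CQ-into-CH} then gives that $\pi_n^*:C_c(G_n,\sigma_n)\to C_c(H_n,(\pi_n|_{H_n})^*\sigma_n)$ is isometric for the reduced norms.

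It remains to splice in the inclusion and reconcile the cocycles. By construction $\sigma=\pi_n^*\check{\sigma}_n$, and for a composable pair in $H_n^{(2)}$ the equality $\check{\sigma}_n=\sigma_n$ on $G_n^{(2)}$ gives $\sigma|_{H_n}=(\pi_n|_{H_n})^*\sigma_n$, so the two cocycles on $H_n$ agree. Applying Proposition~\ref{prop:amenable-inclusion} with $R=G$ (amenable, by Theorem~\ref{thm:properties-G}) and open subgroupoid $H_n$, the extension-by-zero map $\rho:C_c(H_n,\sigma|_{H_n})\to C_c(G,\sigma)$ is isometric for the full norms. Tracing the formulas, $\rho(\pi_n^*f)$ equals $f\circ\pi_n$ on $H_n=\pi_n^{-1}(G_n)\cap\YY_n$ and vanishes elsewhere, which is precisely $\varphi_n(f)$; hence $\varphi_n=\rho\circ\pi_n^*$ is a $*$-homomorphism. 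Chaining the two propositions and using amenability of $H_n$ to bridge the full and reduced norms on $H_n$, for $f\in C_c(G_n,\sigma_n)$ we obtain
$$
\|\varphi_n(f)\|_{C^*(G,\sigma)} = \|\pi_n^*f\|_{C^*(H_n,\sigma|_{H_n})} = \|\pi_n^*f\|_{C^*_r(H_n,\sigma|_{H_n})} = \|f\|_{C^*_r(G_n,\sigma_n)} .
$$
As this holds on the dense subalgebra $C_c(G_n,\sigma_n)$, the map extends by continuity to the isometric $*$-homomorphism $C^*_r(G_n,\sigma_n)\to C^*(G,\sigma)$ asserted. The step I expect to be the main obstacle is the one in the second paragraph: confirming that the coordinate-replacement action is well defined into $H_n$ (via Remark~\ref{r:top1}) and is genuinely free, transitive and equivariant, together with the properness argument; once these are in place, the statement is an assembly of the two general propositions.
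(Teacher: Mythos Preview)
Your proposal is correct and follows essentially the same route as the paper: factor $\varphi_n=\rho\circ\pi^*$ through the open subgroupoid $H_n=\pi_n^{-1}(G_n)\cap\YY_n$ (the paper's $Z$), apply Proposition~\ref{prop:CQ-into-CH} to the restricted projection $\pi_n|_{H_n}$ with the coordinate-replacement action of $G_n$, and Proposition~\ref{prop:amenable-inclusion} to the inclusion $H_n\hookrightarrow G$, with properness handled via compactness of $\YY_n$ and well-definedness of the action via Remark~\ref{r:top1}. Your explicit chain of norm equalities and cocycle reconciliation are a welcome elaboration of what the paper leaves as a one-line appeal to general $C^*$-theory.
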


\begin{proof} Fix $n\in \nn$. We will apply Propositions \ref{prop:CQ-into-CH}
and \ref{prop:amenable-inclusion} in succession.

Let $Q=G_n$, $\tg=\pi_n^{-1}(G_n)\cap \YY_n$, and $R=G$. Notice that
$\tg \subseteq G$ and consider the following maps:
\begin{itemize}
\item
$\pi: \tg \to G_n$ given by the restriction of $\pi_n$ to $\tg$, and
its pullback
$$
\pi^*:C_c(G_n, \sigma_n) \to C_c(\tg, \pi^*\sigma_n)=C_c(\tg,
\sigma).
$$
\item $\rho:C_c(\tg, \sigma) \to C_c(G, \sigma)$ which extends functions
as identically zero outside of $\tg$.
\end{itemize}

We start with the map $\rho$. Notice first that $\tg$ is an open
subgroupoid of $G$. We will endow $\tg$ with the restriction
counting Haar system. $G$ satisfies all the conditions of
Proposition~\ref{prop:amenable-inclusion} by
Theorem~\ref{thm:properties-G}. Therefore the map $\rho$ is an
isometric $*$-homomorphism.

We now turn to the map $\pi^*$. Inheriting properties from $G$ of
which it is an open subgroupoid, $\tg$ as a groupoid is \'etale,
principal, locally compact and Hausdorff.

The map $\pi$ is clearly a surjective continuous groupoid
homomorphism. To see that it is proper, let $K \subseteq G_n$ be
compact. It will be useful to refer not only to the projection
$\pi_n: \tg \to G_n$, but also to the original on $\gcheckinf$, which
we temporarily denote by $\check{\pi}_n:\gcheckinf \to \gcheck_n$ for
distinction. Notice that
$$
\pi^{-1}(K) =  \pi_n^{-1}(K) \cap \YY_n = \check{\pi}_n^{-1}(K) \cap
\YY_n.
$$
$K$ is compact hence closed in $\gcheck_n$, since $\gcheck_n$ is
Hausdorff. Thus $\check{\pi}_n^{-1}(K)$ is closed in $\gcheckinf$
since $\check{\pi}_n$ is continuous by Lemma~\ref{lem:pi}. Since
$\YY_n$ is compact in $\gcheckinf$, it follows that $\pi^{-1}(K)$
is a compact set.

Let $G_n$ act on $\tg$ on the left with respect to the map
$r_{act}:\tg \to G_n^{(0)}$ given by $r_{act}(x)=\pi(r_\tg(x))$ as
follows. An element $\gamma=(i,t,j) \in G_n$ can act on $x=(\alpha,
s, \beta) \in \tg$ if and only if $d_{G_n}(\gamma)=r_{act}(x)$, that
is to say $t=s$ and $j=\alpha|n$. In that case,
$$
\gamma \cdot x := ( i_0 i_1 \dots i_n \alpha_{n+1} \alpha_{n+2}
\dots, t, \beta)
$$
It is clear that the map $r_{act}$ is a surjection. The action map
$(\gamma,x) \mapsto \gamma \cdot x$ is well-defined: $\gamma
=(i,t,j)$ is in $G_n$, so $t \in W_i\cap W_j$. Since $x=(\alpha, t,
\beta) \in \tg$, we have that $\alpha_k=\beta_k$ for $k>n$, $t \in
W_{\beta|n}$ and $t \in \overline{W_{\beta|k}}$ for all $k$. Thus by
Remark~\ref{r:top1} we have that $t\in
\overline{W_i \cap W_{\beta|k}}$ for all $k$. In particular, if we
set $\delta=i_0i_1\dots i_n\beta_{n+1}\dots$, then we have that for
every $k$, $t\in \overline{W_{\delta|k}}$ . We conclude that the
element $\gamma\cdot x = (\delta, t, \beta)$ indeed belongs to
$\tg$.

It is a straightforward verification that this is a left action,
i.e. that axioms (1) - (3) of the definition are satisfied. This
action is free because if $\gamma \cdot x = x$ and $x=(\alpha, t,
\beta)$ then we must have $\gamma=(\alpha|n, t, \alpha|n)=
d_{G_n}(\gamma) = r_{act}(x)$. In order to see that it is
transitive, suppose $y\in \tg$ and $d_Z(y)=d_Z(x)$. Then we can
write $y=(j_1j_2\dots j_n\beta_{n+1}\beta_{n+2}\dots, t, \beta)$.
Since both $x,y \in \tg$, we have that $t \in W_{\alpha|n}$ and
$t\in W_j$. Therefore, if we set $\gamma = (\alpha|n, t, j)$ we
have that $\gamma$ is an element of $G_n$ and $\gamma \cdot y =
x$. Last but not least, $\pi$ is equivariant with respect to the
action.   Indeed, if $(\gamma, x) \in G_n*\tg$ then
$d_{G_n}(\gamma) = r_{act}(x) = \pi(r_{\tg}(x))$. It is
straightforward to verify that $\pi(r_{\tg}(x)) =
r_{G_n}(\pi(x))$, therefore $d_{G_n}(\gamma) = r_{G_n}(\pi(x))$.
It is obvious that $\pi(\gamma \cdot x) = \gamma \pi(x)$.

Having verified all the conditions of Proposition~\ref{prop:CQ-into-CH},
we conclude that $\pi^*$ is an isometric $*$-homomorphism.

Finally, we observe that $\varphi_n$ is precisely $\rho \ \circ \
\pi^*$. Thus $\varphi_n:C_c(G_n, \sigma_n) \to C_c(G, \sigma)$ is an
isometric $*$-homomorphism. It follows from the general theory of
$C^*$-algebras that $\varphi_n$ extends to the $C^*$-completions.
\end{proof}

\section{$C^*(G,\sigma)$ as a Generalized Direct Limit}

We first consider the most trivial case where $\Tspace$
is a single point. Open covers of $\Tspace$ are merely repetitions
of the singleton set, and are determined by their cardinality. It is
easy to see that in this case $G$, $G_\infty$ and $\gcheckinf$ all
coincide with each other and with $\Tspace \ast \ruhf(\Omega)$,
which is in turn isomorphic to $\ruhf(\Omega)$. Therefore $C^*(G)$ is a UHF
$C^*$-algebra (The cohomology of $\Tspace$ is of course trivial, so
there are no cocycles involved). Moreover, the algebras
$C^*(G_n)$ in this case are matrix algebras, and
$\displaystyle{C^*(G) = \lim_{\rightarrow} C^*(G_n)}$.

We would also have $C^*(G)$ as a direct limit in the case where
$\Tspace$ is a finite (discrete) set of points, where $C^*(G)$
can be seen to be an AF $C^*$-algebra. However, in general this
is not the case. In order to be precise, we require the following
definition.
\begin{mydef}\label{def:compatible_cocycles}
Let $n \geq 0$ and take $\sigma_n\in Z^2_\text{ext}(G_n, \torus)$.
Denote by $\check{\sigma}_n$ its extension in $Z^2(\gcheck_n,
\torus)$. Let $m > n$, denote by $\check{\sigma}_{m} \in
Z^2(\gcheck_{m}, \torus)$ the pullback cocycle obtained from the map
$\pi_n^{m}: \gcheck_{m} \to \gcheck_n$ (see Lemma
\ref{lem:pi-n-m}), and let $\sigma_{m} \in Z^2_\text{ext}(G_{m},
\torus)$ be its restriction to $G_{m}$. We will then say that
$\sigma_n$ and $\sigma_{m}$ are \textbf{compatible cocycles}.

Now fix $n_0 \geq 0$. Notice that if $n\leq m \leq k$ then
$\pi_n^k = \pi_n^m \circ \pi_m^k$ and furthermore, $\pi_n =
\pi_n^m \circ \pi_m$. It follows that we can extend this notion
to define a sequence of compatible cocycles $\{ \sigma_n\}_{n
\geq n_0}$. Moreover, there is a well-defined cocycle $\sigma \in
Z^2(G, \torus)$ which is simultaneously the pullback of all the
cocycles $\{ \check{\sigma}_n\}_{n \geq n_0}$ with respect to the
maps $\pi_n: G \to \gcheck_n$. We will say that $\{ \sigma_n\}_{n
\geq n_0}$ is a \textbf{sequence of compatible cocycles with a
limit cocycle $\sigma$}.
\end{mydef}

Let $\{ \sigma_n\}_{n \geq n_0}$ be a sequence of compatible
cocycles with a limit cocycle $\sigma$. In general,
$C^*(G,\sigma)$ is \textit{not} a direct limit of the sequence
$C^*(G_n,\sigma_n)$ since we have no maps $C^*(G_n,\sigma_n) \to
C^*(G_{n+1},\sigma_{n+1})$. Moreover,
$\bigcup_n(\varphi_n(C^*(G_n,\sigma_n)))$ is not dense inside
$C^*(G,\sigma)$.

To see this, consider the following two points in $G$:
$x=(\alpha,t,\alpha)$ and $y=(\beta,t,\beta)$, where $\alpha =
\alpha_0 \alpha_1 \dots \alpha_n \alpha_{n+1} \alpha_{n+2} \dots$
and $\beta = \alpha_0 \alpha_1 \dots \alpha_n \beta_{n+1}
\beta_{n+2}\dots$, and where $t \in W_{\alpha|n}=W_{\beta|n}$ but
$t \not\in W_{\alpha|n+1}$ and $t \not\in W_{\beta|n+1}$. Thus
$x,y \in \pi_n^{-1}(G_n)\cap \YY_n$ whereas $x,y \not\in
\pi_{m}^{-1}(G_n)\cap \YY_m$ for $m>n$. Therefore
$\varphi_k(f)(x) = f(\pi_k(x)) = f(\pi_k(y)) = \varphi_k(f)(y)$
for any $k \leq n$ and any $f \in C_c(G_k,\sigma_k)$, and
$\varphi_k(f)(x) = 0 = \varphi_k(f)(y)$ for any $k > n$ and any
$f \in C_c(G_k,\sigma_k)$. Thus
$\bigcup_n(\varphi_n(C^*(G_n,\sigma_n)))$ cannot separate $x$ and
$y$.

Despite this, we regard $C^*(G,\sigma)$ as a generalized direct
limit of the sequence $C^*(G_n,\sigma_n)$. Our justification for
this is that $G$ satisfies the following minimality property.
Given a subset $S$ of $C^*(G,\sigma)$, we will denote
$$
\supp S  = \{ x\in G ~|~ \exists f \in S \text{ such that }
f(x)\neq 0 \}.
$$

\begin{prop}\label{prop:support}
 $G=\supp \bigcup_n(\varphi_n(C^*(G_n,\sigma_n)))$.
\end{prop}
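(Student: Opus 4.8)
The plan is to establish the two inclusions separately. The inclusion $\supp \bigcup_n \varphi_n(C^*(G_n, \sigma_n)) \subseteq G$ is immediate and carries no content: by Theorem~\ref{thm:properties-G} the groupoid $G$ is amenable and r-discrete, so $C^*(G,\sigma) = C^*_r(G,\sigma)$ and, by Renault's Proposition~II.4.2 in \cite{renault-book}, every element of $C^*(G,\sigma)$ may be regarded as a continuous function on $G$. Consequently the support of any subset of $C^*(G,\sigma)$ is by definition a subset of $G$. All the substance of the proposition therefore lies in the reverse inclusion $G \subseteq \supp \bigcup_n \varphi_n(C^*(G_n, \sigma_n))$.

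For the reverse inclusion I would fix an arbitrary $x = (\alpha, t, \beta) \in G$ and exhibit a single element of some $\varphi_n(C^*(G_n, \sigma_n))$ that does not vanish at $x$. The key observation is that the very condition defining membership in $G$ is exactly the condition placing $x$ in the region where the formula for $\varphi_n$ is an honest pullback. Indeed, by the definition of $G$ there exists $n \in \nn$ with $(\alpha|n, t, \beta|n) \in G_n$ and $\alpha_k = \beta_k$ for all $k > n$. The first assertion says that $\pi_n(x) = (\alpha|n, t, \beta|n) \in G_n$, that is, $x \in \pi_n^{-1}(G_n)$; the second says that $x \in \XX_n$, and since $x \in \gcheckinf$ we obtain $x \in \YY_n = \gcheckinf \cap \XX_n$. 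Hence $x \in \pi_n^{-1}(G_n) \cap \YY_n$, which is precisely the set on which $\varphi_n(f)(x) = f(\pi_n(x))$ according to the formula in Theorem~\ref{thm:main}.

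It then remains to supply a witness. Since $G_n$ is a locally compact Hausdorff groupoid, the point $\pi_n(x) \in G_n$ admits a function $f \in C_c(G_n,\sigma_n)$ with $f(\pi_n(x)) \neq 0$: simply take a continuous compactly supported bump equal to $1$ at $\pi_n(x)$. Viewing $f$ as an element of $C^*(G_n,\sigma_n)$ through the inclusion $C_c(G_n,\sigma_n) \subseteq C^*(G_n,\sigma_n)$, the function $\varphi_n(f) \in \varphi_n(C^*(G_n,\sigma_n))$ satisfies $\varphi_n(f)(x) = f(\pi_n(x)) \neq 0$. This places $x$ in the support and completes the reverse inclusion.

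I do not expect any serious analytic obstacle; the argument is essentially bookkeeping. The only delicate point is the interpretation of elements of the abstract algebra $C^*(G,\sigma)$ as genuine functions on $G$, so that $\supp$ is well-defined and visibly lands in $G$, which is exactly what the r-discreteness and amenability of $G$ from Theorem~\ref{thm:properties-G} provide. Beyond this, the entire proof hinges on recognizing that the level-$n$ membership condition for $G$ coincides with membership in $\pi_n^{-1}(G_n) \cap \YY_n$, the domain on which $\varphi_n$ acts by pullback, so that producing a nonvanishing $C_c$ function on $G_n$ at $\pi_n(x)$ is all that is required.
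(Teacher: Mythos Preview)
Your proposal is correct and follows essentially the same approach as the paper's own proof: both observe that the inclusion $\supp \subseteq G$ is trivial by the definition of $\supp$, and for the reverse inclusion both take $x\in G$, find $n$ with $x\in \pi_n^{-1}(G_n)\cap \YY_n$ from the very definition of $G$, and then pick any $f\in C_c(G_n,\sigma_n)$ nonvanishing at $\pi_n(x)$ so that $\varphi_n(f)(x)=f(\pi_n(x))\neq 0$. Your write-up is simply more explicit about why membership in $G$ unpacks to membership in $\pi_n^{-1}(G_n)\cap \YY_n$ and about the function-space interpretation of $C^*(G,\sigma)$, but the argument is the same.
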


\begin{proof}
We claim that for every $x \in G$ there exists $n \in \mathbb{N}$
and $f \in C_c(G_n,\sigma_n)$ such that $\varphi_n(f)(x) \neq 0$.
The proof of this is simple. Let $x=(\alpha,t,\beta) \in G$. There
exists $n \in \mathbb{N}$ such that $x \in \pi_n^{-1}(G_n)\cap
\YY_n$, i.e. $\pi_n(x) \in G_n$. Take $f \in C_c(G_n,\sigma_n)$ such
that $f(\pi_n(x)) \neq 0$. Then clearly $\varphi_n(f)(x) \neq 0$.
\end{proof}

\begin{remark}
In \cite{muhly-solel-subalgs-groupoid-algs}, P. Muhly and B. Solel
present a bijective correspondence between closed subsets of $G$ and
$C^*(G^{(0)})$-bimodules. In order to invoke their results, certain
assumptions on the groupoid $G$ are required. Our $G$ has all the
required properties, except one: $G$ does not admit a cover by
\textit{compact} open G-sets. (Note that $\gcheckinf$ does admit
such a cover - take the basis sets of the form
$\check{Z}_{\alpha,\beta, \Tspace, n}$. These are compact since they
can be written as $p_n^{-1}( \{(\alpha|n, \beta|n)\}) \cap \YY_n$.)
Nevertheless, it may be that the results of Muhly and Solel remain
valid without the compact open G-sets assumption. Should this be the
case, we would have as a corollary of Proposition \ref{prop:support}
the following statement: The $C^*(G^{(0)})$-bimodule generated by
$\bigcup_n(\varphi_n(C^*(G_n,\sigma_n)))$ is $C^*(G,\sigma)$.
\end{remark}

\section*{Acknowledgments}

We thank Alex Kumjian, Paul Muhly, Iain Raeburn, Jean Renault and
Pedro Resende for useful remarks and discussions. This research is
based in part on the first named author's Ph.D dissertation,
supervised by Eli Aljadeff and Baruch Solel, and we extend to them
our appreciation and many thanks. A. C. was partly supported by the
Graduate School of the Technion - Israel Institute of Technology,
and D. M. was partially supported by a Technion Swiss Society
Postdoctoral Fellowship.

\bibliographystyle{amsalpha}
\bibliography{groupoids}

\end{document}